\numberwithin{equation}{section}
\newcommand{\G}{\Gamma}
\renewcommand{\d}{\delta}
\newcommand{\m}{\mu}
\renewcommand{\SS}{\Sigma}
\renewcommand{\t}{\tau}
\newcommand{\C}{{\mathbb C}}
\newcommand{\R}{{\mathbb R}}
\newcommand{\Z}{{\mathbb Z}}
\newcommand{\nb}{{\mathbf n}}
\newcommand{\QF}{\mathfrak Q}
\newcommand{\Fc}{{\mathcal F}}
\newcommand{\Hc}{{\mathcal H}}
\newcommand{\Lcc}{{\mathcal L}}
\newcommand{\supp}{\hbox{{\rm supp}}\,}
\newcommand{\Q}{\QF}
\newcommand{\pd}{\partial} 
\newcommand{\pa}{\bar{\partial}}
\DeclareMathOperator{\rank}{rank}
\newcommand{\CapB}{\operatorname{Cap\,}}
\newcommand{\Ker}{\hbox{{\rm Ker}}\,}
\newcommand{\Ran}{\hbox{{\rm Ran}}\,}
\newcommand{\Dom}{\operatorname{Dom\,}}
\newtheorem{theorem}{Theorem}[section]
\newtheorem{proposition}[theorem]{Proposition}
\theoremstyle{definition}
\theoremstyle{remark}
\newtheorem*{remark*}{\bf Remark}
\newcommand{\abs}[1]{\lvert#1\rvert}
\newcommand{\norm}[1]{\lVert#1\rVert}
\newcommand{\x}{X}
\begin{document}

\sloppy

\title[Eigenvalue Clusters]{Eigenvalue clusters of the Landau Hamiltonian in the exterior of a compact domain}

 \author[Pushnitski]{Alexander  Pushnitski}
\address[A. Pushnitski]{Department of Mathematics
King's College London Strand, London WC2R  2LS UK}
 \email{alexander.pushnitski@kcl.ac.uk}
\author[Rozenblum]{Grigori Rozenblum}
\address[G. Rozenblum]{Department of Mathematical Sciences \\
                        Chalmers University of Technology \\
                        and University of Gothenburg \\
                        Chalmers Tv\"argatan, 3, S-412 96
                         Gothenburg
                        Sweden}
\email{grigori@math.chalmers.se}
\begin{abstract}
We consider the Schr\"odinger operator with a constant magnetic field in the exterior 
of a compact domain on the plane. 
The spectrum of this operator consists of clusters of eigenvalues 
around the Landau levels. We discuss the rate of accumulation of eigenvalues in a fixed cluster. 
\end{abstract}
\keywords{Schr\"odinger operator, magnetic field, spectrum, exterior problem}
\date{19 June 2007}

\maketitle

\section{Introduction}\label{intro}

\subsection{Preliminaries}
 The  Landau Hamiltonian describes a charged
particle confined to a plane  in a constant magnetic
field. The Landau Hamiltonian is one of the earliest explicitly solvable
quantum mechanical models.
Its spectrum consists of
the Landau levels,\footnote{It is a little known fact that this was worked 
out by Fock two years before Landau; see \cite{Fock1, Landau}.}  
infinitely degenerate eigenvalues
placed at the points of an arithmetic progression.

In \cite{Uzy}, the Landau Hamiltonian was considered in the exterior of a compact obstacle.
Introducing the obstacle 
produces  clusters of eigenvalues of finite multiplicity around the Landau levels.
Various asymptotics (high energy, semiclassical) of these eigenvalue clusters were 
studied in \cite{Uzy}. 
In this paper we focus on a different aspect of 
the spectral analysis of this model: 
for a fixed eigenvalue cluster, we consider the rate of accumulation of 
eigenvalues in this cluster to the Landau level. 
We describe this rate of accumulation 
rather precisely in terms of the logarithmic
capacity of the obstacle. 

Our construction is motivated by the recent progress in the study of the Landau 
Hamiltonian on the whole plane perturbed 
by a compactly supported or fast decaying
electric or magnetic field, see \cite{RaiWar,MelRoz,FilPush,RozTa}.
In particular, we use some  operator theoretic constructions
from \cite{RaiWar} and \cite{MelRoz} and some 
concrete analysis (related to logarithmic capacity) from \cite{FilPush}.

\subsection{The Landau Hamiltonian} 
We will write $x=(x_1,x_2)\in \R^2$ and identify $\R^2$ with $\C$ 
in the standard way, setting $z=x_1+ix_2\in\C$. 
The Lebesgue measure in $\R^2$  will be  denoted by $dx$ and 
in $\C$ by $dm(z)$. 
The derivatives with respect to $x_1$, $x_2$ are denoted by $\pd_k=\pd_{x_k}$;  
we set, as usual,  $\pa=(\pd_1+i\pd_2)/2,\; \pd=(\pd_1-i\pd_2)/2$.

We denote by $B>0$ the magnitude of the constant magnetic field
in $\R^2$. We choose the gauge 
$A(x)=(A_1(x), A_2(x))=(-\frac12 Bx_2, \frac12 Bx_1)$ 
for the magnetic vector potential associated with this field. 
The magnetic Hamiltonian on the whole plane is defined as 
\begin{equation}\label{2:Schr.0}
{\x_0}=-(\nabla-i A)^2 \quad\text{ in } L^2(\R^2).
\end{equation}
More precisely, for $u\in C_0^\infty(\R^2)$ we set
\begin{equation}\label{1:Form1}
 \norm{u}_{H^1_A}^2=\int_{\R^2}\left|i\nabla u(x)+ A(x) u(x)\right|^2 dx
\end{equation}
and define $X_0$ as the selfadjoint operator which corresponds to the 
closure of the quadratic form $\norm{u}_{H^1_A}^2$, $u\in C_0^\infty(\R^2)$.

It is well known (see \cite{Fock1,Lan} or \cite{LanLif}) that the spectrum of ${\x_0}$ consists of 
the eigenvalues $\Lambda_q=(2q+1)B$, $q=0,1,\dots$, of infinite multiplicity. 
In particular, we have 
\begin{equation}
\norm{u}_{H^1_A}^2\geq B\norm{u}_{L^2}^2, 
\quad 
u\in C_0^\infty(\R^2).
\label{lowerbd}
\end{equation}
We will denote by $\Lcc_q$ the eigenspace of ${\x_0}$ corresponding to $\Lambda_q$ 
and by $P_q$ the operator of orthogonal projection onto $\Lcc_q$ in $L^2(\R^2)$. 
Later on, we will need an explicit description of $\Lcc_q$; this will be discussed 
in section~\ref{subspaces}.

Let $\Omega\subset\R^2$ be an open set. In order to define the magnetic 
Hamiltonian in $\Omega$, it is convenient to consider the associated quadratic form. 
Following  \cite{LiebLoss}, we denote by $H^1_A(\Omega)$ the closure of 
$C_0^\infty(\Omega)$ with respect to the norm $\norm{u}_{H^1_A}$. 
The quadratic form $\norm{u}_{H^1_A}^2$ is closed in $L^2(\Omega)$
and (by \eqref{lowerbd}) positively defined.  
This form defines a self-adjoint operator in $\R^2$ which we denote by ${\x}(\Omega)$. 
If $\Omega$ is bounded by a smooth curve, then the usual
computations show that this definition of
${\x}(\Omega)$ corresponds to setting the Dirichlet
boundary condition on $\partial \Omega$. The operator
${\x_0}$ corresponds to taking $\Omega=\R^2$ in the above
definitions.

\subsection{Main results}
Let $K\subset\R^2$ be a compact set and $K^c$ its complement. 
Our main results concern the spectrum of the operator ${\x}(K^c)$.  
First we state a preliminary result which
gives a general description of the spectrum of
${\x}(K^c)$. This result is
already known (see \cite{Uzy}) but as  part
of our construction, we provide a simple proof in
Section~\ref{outline}.

\begin{proposition}\label{th1}
Let $K\subset \mathbb{R}^2$ be a compact set. Then
$$
\sigma_{ess}({\x}(K^c))=\sigma_{ess}({\x_0})=\cup_{q=0}^\infty\{\Lambda_q\},
\quad \Lambda_q=(2q+1)B.
$$
Moreover, for all $q$ and all $\lambda\in(\Lambda_{q-1},\Lambda_q)$ 
the number of eigenvalues of
${\x}(K^c)$ in $(\lambda,\Lambda_q)$ is finite.
\end{proposition}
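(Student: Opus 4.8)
The plan is to treat the two assertions of the proposition separately, both resting on the following elementary remark. Extending functions by zero gives an isometric inclusion $\iota\colon L^2(K^c)\hookrightarrow L^2(\R^2)$ under which $H^1_A(K^c)$ becomes a closed subspace of $H^1_A(\R^2)$ and $\norm{u}_{H^1_A}^2=\norm{\iota u}_{H^1_A}^2$ for $u\in H^1_A(K^c)$. Thus $\x(K^c)$ is exactly the self-adjoint operator obtained by restricting the quadratic form of $\x_0$ to the subspace $\iota\big(H^1_A(K^c)\big)$; in particular $\x(K^c)\ge\x_0$ in the form order, and by \eqref{lowerbd} also $\x(K^c)\ge\Lambda_0$.

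\emph{Step 1: $\sigma_{ess}(\x(K^c))\supseteq\bigcup_q\{\Lambda_q\}$.} For each $q$ I would produce a singular Weyl sequence at $\Lambda_q$. Pick $0\ne\psi\in\Lcc_q$ and apply the magnetic translations $\tau_{a_n}$ with $\abs{a_n}\to\infty$ and $\abs{a_n-a_m}\to\infty$; these are unitary, commute with $\x_0$ (hence preserve $\Lcc_q$), while $\tau_{a_n}\psi$ and $(\nabla-iA)\tau_{a_n}\psi$ decay super-exponentially away from $a_n$. Fix $\chi\in\ccs(\R^2)$ with $\chi\equiv1$ near $K$ and set $u_n:=(1-\chi)\tau_{a_n}\psi/\norm{(1-\chi)\tau_{a_n}\psi}$. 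Then $u_n\in H^1_A(K^c)$, the $u_n$ are orthonormal up to $o(1)$ and converge weakly to $0$, and since $\x(K^c)$ acts as $\x_0$ on functions supported in $K^c$ one gets $\norm{(\x(K^c)-\Lambda_q)u_n}=\norm{(\x_0-\Lambda_q)u_n}\to0$, because $(\x_0-\Lambda_q)\tau_{a_n}\psi=0$ and the commutator $[\x_0,\chi]\tau_{a_n}\psi\to0$ by the decay of $\tau_{a_n}\psi$ near $K$. Hence $\Lambda_q\in\sigma_{ess}(\x(K^c))$.

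\emph{Step 2: the reverse inclusion.} Suppose $\mu\notin\bigcup_q\{\Lambda_q\}$ and $\mu\in\sigma_{ess}(\x(K^c))$; take an orthonormal singular sequence $u_n\in\Dom(\x(K^c))$ with $u_n\rightharpoonup0$ and $f_n:=(\x(K^c)-\mu)u_n\to0$ in $L^2$. With the same $\chi$ put $w_n:=(1-\chi)u_n$, extended by $0$ across $K$. Interior elliptic regularity for the (smooth-coefficient) magnetic Laplacian on the bounded set $\supp\nabla\chi\Subset K^c$ bounds $\{u_n\}$ in $H^2$ there, hence $u_n\to0$ in $H^1(\supp\nabla\chi)$ by Rellich and $[\x_0,\chi]u_n\to0$ in $L^2$. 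Since $1-\chi$ vanishes near $\partial K$ one has $w_n\in\Dom(\x_0)$, $(\x_0-\mu)w_n=(1-\chi)f_n-[\x_0,\chi]u_n\to0$, while $\norm{w_n}\to1$ because multiplication by the compactly supported bounded function $2\chi-\chi^2$ is compact as a map $H^1_A(\R^2)\to L^2(\R^2)$. This contradicts the bounded invertibility of $\x_0-\mu$. Combining the two inclusions with $\sigma_{ess}(\x_0)=\bigcup_q\{\Lambda_q\}$ proves the first assertion. (Alternatively one may note that $\x(K^c)$ coincides with $\x_0$ outside any disc containing $K$ and invoke the decomposition principle for elliptic operators.)

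\emph{Step 3: finiteness of the eigenvalue count in $(\lambda,\Lambda_q)$.} By Step 1 the eigenvalues of $\x(K^c)$ in the gap $(\Lambda_{q-1},\Lambda_q)$ can accumulate only at its endpoints, so only accumulation at $\Lambda_q$ from the left remains to be excluded. For this I would carry out the Birman--Schwinger/Grushin reduction near $\Lambda_q$ on which the rest of the paper rests — in the spirit of the operator-theoretic schemes of \cite{RaiWar,MelRoz} and the capacitary computations of \cite{FilPush}: the eigenvalues of $\x(K^c)$ in a punctured neighbourhood of $\Lambda_q$ correspond, to leading order, to $\Lambda_q$ plus the eigenvalues of a compact Berezin--Toeplitz-type operator $\Tc_q$ on $\Lcc_q$ built from the capacitary (equilibrium) data of $K$. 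The form inequality $\x(K^c)\ge\x_0$ forces $\Tc_q\ge0$, so its eigenvalues tend to $0$ from the positive side; hence all but finitely many cluster eigenvalues lie above $\Lambda_q$, and only finitely many eigenvalues of $\x(K^c)$ lie in $(\lambda,\Lambda_q)$. I expect this reduction to be the main obstacle: the passage from $\x_0$ to $\x(K^c)$ is a form perturbation that is \emph{not} $\x_0$-bounded — it is a hard Dirichlet constraint on a possibly irregular compact set — so the Birman--Schwinger analysis must be run entirely in terms of quadratic forms and logarithmic capacity rather than a potential; the one thing that comes for free is the sign $\Tc_q\ge0$, dictated by $\norm{u}_{H^1_A}^2=\norm{\iota u}_{H^1_A}^2\ge\Lambda_0\norm{u}^2$.
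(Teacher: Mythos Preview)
Your Steps~1 and~2 give a correct route to the essential-spectrum assertion, but a different one from the paper's. The paper never builds Weyl sequences; instead it passes to inverses, setting $R(K^c)={\x}(K^c)^{-1}\oplus 0$ on $L^2(\R^2)=L^2(K^c)\oplus L^2(K)$, and proves that $W:={\x_0}^{-1}-R(K^c)$ is compact and $\geq 0$ (equation~\eqref{b10} and its proof in Section~\ref{ReductionToToeplitz}). Compactness of $W$ yields $\sigma_{ess}(R(K^c))=\sigma_{ess}({\x_0}^{-1})$ by Weyl's theorem in one stroke, and \eqref{invariance} translates this back to ${\x}(K^c)$. The compactness proof sandwiches $0\leq W\leq {\x_0}^{-1}-R(D^c)$ for a disc $D\supset K$ and handles the disc via an explicit boundary-trace computation. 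Your direct approach is valid and more hands-on; one small caveat is that in Step~1 you must pick a \emph{specific} $\psi\in\Lcc_q$ with Gaussian decay (a coherent state), not an arbitrary element, since generic elements of $\Lcc_q$ need not decay.

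Step~3, however, is not a proof: you announce a Birman--Schwinger/Grushin reduction and then correctly identify why it is problematic --- the Dirichlet constraint is not an ${\x_0}$-bounded perturbation --- without resolving the issue. The paper's resolution is precisely the move to inverses that you did not make. While ${\x}(K^c)-{\x_0}$ is not a sensible bounded object, $W={\x_0}^{-1}-R(K^c)$ is, and Proposition~\ref{th4}(i) gives $W\geq 0$ immediately from the form-domain inclusion you already noted. Once $W\geq 0$ is known to be compact, the elementary fact (e.g.\ \cite[Theorem~9.4.7]{BS}) that the eigenvalues of $T-W$ cannot accumulate to an isolated eigenvalue of $T$ from above, applied with $T={\x_0}^{-1}$, shows that the eigenvalues of $R(K^c)$ do not accumulate to $\Lambda_q^{-1}$ from above --- equivalently, those of ${\x}(K^c)$ do not accumulate to $\Lambda_q$ from below. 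No Toeplitz reduction, Grushin scheme, or capacitary input is needed at this stage; those enter only for the quantitative Theorems~\ref{th2} and~\ref{th3}. The difficulty you anticipate simply disappears once one compares resolvents rather than the operators themselves.
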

In other words, the last statement means that the eigenvalues of ${\x}(K^c)$ 
can accumulate to the Landau levels only from above.

For all $q\geq0$, we enumerate the eigenvalues of
${\x}(K^c)$ in $(\Lambda_q, \Lambda_{q+1})$:
$$
\lambda_{1}^q\geq \lambda_{2}^q \geq\dots
$$
Proposition~\ref{th1} ensures that
$\lambda_{n}^{q}\to \Lambda_q$ as
$n\to\infty$.   Below we describe the rate of this 
convergence. Roughly speaking, we will see that
for large $n$,
\begin{equation}
\frac{a^n}{n!}\leq
\lambda_{n}^{q}-\Lambda_q\leq
\frac{b^n}{n!} \label{lambdas}
\end{equation}
with some  $a,b$ depending on $K$.
In order to discuss the dependence of  $a,b$ on the
domain $K$, let us introduce the following notation:
\begin{equation}
\begin{split}
\Delta_q(K)&=\limsup_{n\to\infty}
[n!(\lambda_{n}^q-\Lambda_q)]^{1/n},
\\
\delta_q(K)&=\liminf_{n\to\infty}
[n!(\lambda_{n}^{q}-\Lambda_q)]^{1/n}.
\end{split}
\label{deltas}
\end{equation}
The estimates for these spectral characteristics will
be given in terms of the logarithmic capacity of $K$ 
which is denoted by $\CapB(K)$.  
For the definition and properties of logarithmic capacity, we refer to 
\cite{Lan}.  
We will also need a version of inner capacity, which we
denote by   $\CapB_-(K)$ and define by
$$
\sup\{\CapB S\mid \text{ $S\subset K$ is a domain with a Lipschitz boundary}\}.
$$
By $Pc(K)$ we denote the polynomial convex hull of $K$.
$Pc(K)$ can be alternatively described as the complement
of the unbounded connected component of $K^c$.
It is well known that $\CapB(K)=\CapB(Pc(K))$ for any compact $K$.

\begin{theorem}\label{th2}
Let $K\subset \mathbb{R}^2$ be a compact set;
then for all $q\geq 0$ one has
\begin{align*}
\Delta_q(K)\leq \frac{B}2 (\CapB(K))^2, \quad
\\
\delta_q(K)\geq \frac{B}2 (\CapB_-(Pc(K)))^2. \quad
\end{align*}
\end{theorem}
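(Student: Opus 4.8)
The plan is to reduce the eigenvalue asymptotics of $\x(K^c)$ near a fixed Landau level $\Lambda_q$ to the spectral asymptotics of a compact Toeplitz-type operator acting on the Landau eigenspace $\Lcc_q$, and then to identify that operator's eigenvalues with a concrete quantity governed by the logarithmic capacity of $K$. First I would set up the Birman--Schwinger / variational comparison: since eigenvalues accumulate at $\Lambda_q$ only from above (Proposition~\ref{th1}), the counting function $n\mapsto\lambda_n^q-\Lambda_q$ near $\Lambda_q$ is controlled by the restriction to $\Lcc_q$ of a resolvent-difference operator comparing $\x(K^c)$ with $\x_0$. The Dirichlet condition on $K$ means $\x(K^c)$ is obtained from $\x_0$ by form-domain restriction; the standard machinery (as in \cite{RaiWar,MelRoz}) shows that, modulo operators with faster-than-factorial eigenvalue decay, the relevant effective operator is $P_q\,\mathbf 1_K\,P_q$ up to a known factor, or more precisely a Toeplitz operator $T_K=P_q\chi_K P_q$ restricted to $\Lcc_q$. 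The key reduction step is therefore: $\lambda_n^q-\Lambda_q \asymp c\cdot s_n(T_K)$ for an explicit constant $c$ depending on $B$ and $q$, where $s_n$ denotes the ordered eigenvalues; this is where I expect to lean most heavily on the operator-theoretic framework from \cite{RaiWar} and \cite{MelRoz}, and it is the main obstacle, because one must control the error terms uniformly and show they do not affect the $\limsup$ and $\liminf$ in \eqref{deltas}.

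Next I would analyze $T_K=P_q\chi_K P_q$. Using the explicit description of $\Lcc_q$ (promised in Section~\ref{subspaces}) — for $q=0$ the Fock--Bargmann space of entire functions with Gaussian weight, with the analogous polyanalytic description for $q\ge1$ — the operator $T_K$ becomes an integral operator with an explicit reproducing-kernel structure. Its eigenvalue asymptotics are known to be governed by the logarithmic capacity: this is precisely the ``concrete analysis'' of \cite{FilPush}. Specifically, one expects
\begin{equation*}
\limsup_{n\to\infty}[n!\,s_n(T_K)]^{1/n}=\tfrac{B}{2}(\CapB(K))^2,
\end{equation*}
and for the lower bound one uses monotonicity of Toeplitz operators: if $S\subset K$ is a domain with Lipschitz boundary then $T_S\le T_K$, hence $s_n(T_S)\le s_n(T_K)$, and an explicit computation (e.g.\ for a disc, where everything diagonalizes in the angular momentum basis and one gets incomplete-Gamma-function tails) yields the capacity of $S$; taking the supremum over such $S\subset Pc(K)$ produces $\CapB_-(Pc(K))$. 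The passage from $K$ to $Pc(K)$ in the upper bound uses the stated fact $\CapB(K)=\CapB(Pc(K))$ together with the fact that filling in bounded components of $K^c$ only enlarges the form domain's complement by a set that does not change the Dirichlet problem's essential behaviour near $\Lambda_q$.

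Finally I would assemble the two halves. The upper bound on $\Delta_q(K)$ follows by combining the reduction $\lambda_n^q-\Lambda_q\le c\, s_n(T_K)(1+o(1))$ with the capacity upper bound for $s_n(T_K)$, noting that the $n!$ normalization and the $n$-th root make multiplicative $(1+o(1))$ errors invisible. The lower bound on $\delta_q(K)$ follows from $\lambda_n^q-\Lambda_q\ge c\, s_n(T_S)(1+o(1))$ for each admissible $S\subset Pc(K)$, the disc-type computation of the sharp constant for $s_n(T_S)$, optimizing over $S$, and then passing to the $\liminf$. The one genuinely delicate point, beyond the reduction step, is verifying that the inner-capacity approximation is tight — i.e.\ that Lipschitz subdomains of $Pc(K)$ exhaust the capacity — and that the reduction's error terms are $o(1)$ in the multiplicative sense on the $1/n$-scale; everything else is either cited from \cite{RaiWar,MelRoz,FilPush} or is a routine estimate on incomplete Gamma functions.
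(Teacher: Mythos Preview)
Your broad strategy---reduce the cluster near $\Lambda_q$ to a Toeplitz-type operator on $\Lcc_q$ and then invoke the capacity asymptotics of \cite{FilPush}---matches the paper's. But the specific reduction you propose has a genuine gap.

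The effective operator is \emph{not} $P_q\chi_K P_q$. The paper works at the resolvent level: with $R(K^c)=\x(K^c)^{-1}\oplus 0$ one has $R(K^c)=\x_0^{-1}-W$ where $W\ge 0$ is compact (proved via form monotonicity, Proposition~\ref{th4}, and an explicit computation for a disc). An abstract perturbation lemma (Proposition~\ref{th5}) then shows that the accumulation of $(\lambda_n^q)^{-1}$ to $\Lambda_q^{-1}$ is governed by the eigenvalues of $P_qWP_q$. This $W$ is a resolvent difference, not a multiplication operator, and it is \emph{not} asymptotically $c\,\chi_K$: for $K$ a smooth arc one has $\chi_K=0$ in $L^2$, so $P_q\chi_K P_q=0$, yet Theorem~\ref{th3} gives $\Delta_q(K)>0$. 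So your identification would already fail for such $K$, and the ``standard machinery'' of \cite{RaiWar,MelRoz} that you cite treats additive potentials $\x_0+V$, not Dirichlet obstacles; it does not produce the reduction you claim.

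What the paper actually does to bound $P_qWP_q$ is two separate, non-obvious estimates. For the upper bound one takes a cutoff $\omega$ supported in a Lipschitz neighborhood $U\supset K$, uses Proposition~\ref{th4}(ii) to kill $W$ on $(1-\omega)P_q\psi$, and after a creation-operator computation obtains $P_qWP_q\le C\,P_{q+1}\chi_U P_{q+1}+F$ with $F$ of finite rank; then one lets $\CapB(U)\downarrow\CapB(K)$. For the lower bound one first passes from $K$ to $Pc(K)$ by the ``holes'' argument (bounded components of $K^c$ have compact resolvent, so contribute only finitely many eigenvalues)---this is where $Pc(K)$ enters, in the \emph{lower} bound, not the upper one as you wrote---and then compares $R(K^c)$ with the inverse of an auxiliary operator $\tilde\x=\x_0+\gamma^*\gamma$ built from the restriction to a Lipschitz $S\subset Pc(K)$, yielding $P_qWP_q\ge c\,(\gamma J^*P_q)^*(\gamma J^*P_q)+F$. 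Only then does Proposition~\ref{prop1} (the \cite{FilPush} input) apply to each side. Your monotonicity idea $T_S\le T_K$ is morally right for the lower bound, but the link from the Dirichlet problem to any $T_S$ has to go through $W$ and the auxiliary $\tilde\x$, not through $P_q\chi_K P_q$.
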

The lower bound in the above theorem is strictly positive
if and only if  the compact $K$ has a non-empty interior. 
In particular, for such compacts the number of
eigenvalues $\lambda_1^q$, $\lambda_2^q$, \dots 
is infinite for each $q$.
However, even for some compacts without interior points,
lower spectral  bounds can be obtained.
In particular, this can be done for the
compact $K$ being a smooth (not necessarily
closed)  curve.

\begin{theorem}\label{th3}
Let $K\subset \mathbb{R}^2$ be a $C^\infty$ smooth simple curve.
Then for all $q\geq0$, one has
$$
\Delta_q(K)=
\delta_q(K)=
\frac{B}{2}(\CapB(K))^2.
$$
\end{theorem}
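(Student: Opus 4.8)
The plan is to obtain matching upper and lower bounds for $\Delta_q(K)$ and $\delta_q(K)$ when $K$ is a smooth simple curve. The upper bound $\Delta_q(K)\le \frac{B}{2}(\CapB(K))^2$ is already supplied by Theorem~\ref{th2}, so the entire content of Theorem~\ref{th3} is the lower bound $\delta_q(K)\ge \frac{B}{2}(\CapB(K))^2$. The difficulty is that for a curve $K$ has empty interior, so $\CapB_-(Pc(K))=0$ and the lower bound of Theorem~\ref{th2} is vacuous; a genuinely different argument is needed. First I would set up the reduction, already used to prove Proposition~\ref{th1} and Theorem~\ref{th2}, that reduces the counting of eigenvalues $\lambda_n^q$ in the cluster above $\Lambda_q$ to the study of the eigenvalue asymptotics of a compact, nonnegative Toeplitz-type operator $T_q=P_q \chi_\Omega P_q$ (or a closely related operator built from the resolvent difference of $\x(K^c)$ and $\x_0$ compressed to $\Lcc_q$), where $\Omega$ is a neighbourhood of $K$; the key point is that the eigenvalues of $\x(K^c)-\Lambda_q$ and the singular values of this compression are comparable up to constants that do not affect the $1/n$-root limit. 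Since $\CapB(K)=\CapB(Pc(K))$ and capacity is monotone, I may also replace $K$ by any slightly fattened neighbourhood at the cost of a capacity that tends to $\CapB(K)$, which will later be sent to the limit.

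Next I would bring in the concrete analysis from \cite{FilPush} relating logarithmic capacity to the decay of the relevant matrix entries in the Landau basis. The eigenspace $\Lcc_q$ has an explicit description (promised in Section~\ref{subspaces}) in terms of functions of the form $p(z,\bar z)e^{-B|z|^2/4}$; for $q=0$ this is the Fock/Bargmann space of entire functions $f(z)e^{-B|z|^2/4}$, and the compression $P_0\chi_K P_0$ becomes a Hankel/Toeplitz operator on the Bargmann space with symbol supported near $K$. The decay rate of its eigenvalues is governed by how well polynomials approximate, in the relevant weighted $L^2$ sense, functions concentrated on $K$; this is exactly a Bernstein–Walsh type problem, and the sharp exponential rate is controlled by the Green's function of $Pc(K)^c$ with pole at infinity, whose relevant parameter is precisely $\log\CapB(K)$. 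Concretely, I expect to show that the $n$-th eigenvalue of the compression behaves like $\CapB(K)^{2n}/n!$ up to subexponential factors, which yields $[n!(\lambda_n^q-\Lambda_q)]^{1/n}\to \frac{B}{2}\CapB(K)^2$. For general $q\ge 1$ the space $\Lcc_q$ is unitarily equivalent to the $q=0$ case by the standard creation-operator (or Landau-level intertwining) argument, so the rate is independent of $q$.

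The main technical step — and the place I expect the real work — is the matching lower bound for the eigenvalues of the compression $P_q\chi_K P_q$ when $K$ is only a curve. The upper bound direction (eigenvalues decay no slower than $\CapB(K)^{2n}/n!$) is the easy half and essentially dual to the polynomial-approximation estimate; the lower bound requires producing, for each $n$, an $n$-dimensional subspace of $\Lcc_q$ on which $\chi_K$ acts with norm at least $c\,\CapB(K)^{2n}/n!$. For a curve this means exhibiting entire functions whose weighted $L^2$ mass genuinely concentrates on a one-dimensional set, which one typically does by choosing functions adapted to the exterior conformal map of $Pc(K)^c$ (Faber polynomials or their analogues), using that a smooth simple curve is a regular set for the Dirichlet problem so that the Green's function and equilibrium measure are well-behaved and the Bernstein–Walsh asymptotics are two-sided. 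I would handle this by approximating $K$ from outside by thin smooth domains $S_\varepsilon\supset K$ with $\CapB(S_\varepsilon)\to\CapB(K)$, applying the interior-type lower bound of Theorem~\ref{th2} on $S_\varepsilon$ to get $\delta_q(S_\varepsilon)\ge\frac{B}{2}(\CapB_-(S_\varepsilon))^2$, then using monotonicity of the eigenvalues under $K\subset S_\varepsilon$ together with $\CapB_-(S_\varepsilon)\to\CapB(K)$ (which holds because $S_\varepsilon$ is a Lipschitz domain shrinking to the curve) to pass to the limit and conclude $\delta_q(K)\ge\frac{B}{2}(\CapB(K))^2$. Combined with the upper bound from Theorem~\ref{th2}, this forces $\Delta_q(K)=\delta_q(K)=\frac{B}{2}(\CapB(K))^2$.
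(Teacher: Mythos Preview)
Your lower-bound argument has a genuine gap: the monotonicity step goes the wrong way. If $K\subset S_\varepsilon$ then $S_\varepsilon^c\subset K^c$, so by Proposition~\ref{th4}(i) one has $R(S_\varepsilon^c)\le R(K^c)\le X_0^{-1}$, hence $W(K)=X_0^{-1}-R(K^c)\le X_0^{-1}-R(S_\varepsilon^c)=W(S_\varepsilon)$. Equivalently, $\lambda_n^q(K)-\Lambda_q\le \lambda_n^q(S_\varepsilon)-\Lambda_q$: a larger obstacle produces a larger perturbation, so a lower bound for the fattened domain $S_\varepsilon$ gives no lower bound whatsoever for the curve $K$. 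This is exactly why the curve case cannot be reduced to Theorem~\ref{th2} and needs a new mechanism. (Relatedly, the operator $P_q\chi_K P_q$ you write down is identically zero when $K$ is a curve of Lebesgue measure zero, so it cannot carry any lower bound either.)

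What the paper does instead is to work with the arc-length measure $\mu$ on the curve. One introduces the auxiliary operator $\tilde X$ corresponding to the form $\norm{u}_{H^1_A}^2+\int_K|u|^2\,d\mu$; since $H^1_A(K^c)$ sits inside the form domain of $\tilde X$ and the forms agree there, Proposition~\ref{th4}(i) gives $R(K^c)\le \tilde X^{-1}$, hence $P_qWP_q\ge P_q(X_0^{-1}-\tilde X^{-1})P_q$. The right-hand side is bounded below, up to finite rank, by a constant times the Toeplitz operator $T_q(\mu)=(\gamma J^*P_q)^*(\gamma J^*P_q)$ built from the restriction to $(K,\mu)$, and the eigenvalue asymptotics of $T_q(\mu)$ is established directly (Proposition~\ref{prop1}(ii)): for $q=0$ by the argument of \cite{FilPush} with the singular measure in place of a weight, and for $q\ge1$ by the creation-operator reduction together with an $H^1(0,s)\hookrightarrow L^2(0,s)$ compactness argument along the curve that controls the lower-order $\partial^k f$ terms by $\partial^q f$ on a subspace of finite codimension. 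Your sketch gestures at a direct construction via Faber polynomials, which might be made to work, but you abandon it for the monotonicity shortcut, and that shortcut fails.
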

\begin{remark*}
\begin{enumerate}
\item
One can prove that 
\begin{equation}
\text{if $\CapB(K)=0$, then $C_0^\infty(K^c)$ is dense in $H^1_A(\R^2)$.}
\label{polar}
\end{equation}
It follows that for $K$ of zero capacity, $H^1_A(K^c)=H^1_A(\R^2)$ and therefore $\x(K^c)=\x_0$.  
Thus, for such  $K$ the spectrum of $\x(K^c)$
consists of Landau levels $\Lambda_q$.

The statement \eqref{polar} seems to be  well known to the experts in the field 
although it is difficult to pinpoint the exact reference.
One can use the argument of  \cite{AdHed}, Theorem 9.9.1; this argument applies to the usual $H^1$ 
Sobolev norm, but it is very easy to modify it for the norm $H^1_A$.  
In this theorem the Bessel capacity rather than the logarithmic capacity is used; however, 
the Bessel capacity of a compact set vanishes if and only if its logarithmic capacity vanishes. 
In order to prove the latter fact (again, well known to experts) one has to 
combine Theorem~2.2.7 in \cite{AdHed} and Sect.II.4 in \cite{Lan}.
\item
We do not know whether it  possible for $\Lambda_q$ to remain eigenvalues of
$\x(K^c)$ of infinite multiplicity if $\CapB(K)>0$.
\item
Following the proof  of Theorem~\ref{th2} and using the results of \cite{FilPush}, it is easy to show that for 
$q=0$,  the lower bound in this theorem can be replaced by the following one:
\begin{gather*}
\delta_0(K)\geq \frac{B}2 (\CapB(K_-))^2,
\\
K_-=
\{z\in\C\mid\limsup_{r\to+0}
\frac{\log m(Pc(K)\cap D_r(z))}{\log r}<\infty\}, 
\end{gather*}
where $D_r(z)=\{\zeta\in\C\mid \abs{\zeta-z}\leq r\}$, and
$m(\cdot)$ is the Lebesgue measure.
\item
Analysing the proof of Theorem~\ref{th3}, it is easy to see that if we are only interested 
its statement for finitely many $q$, it suffices to require some finite smoothness of the 
curve $K$. 
\end{enumerate}
\end{remark*}

\subsection{Outline of the proof}\label{outline}
Let us write $L^2(\R^2)=L^2(K^c)\oplus L^2(K)$.
(If the Lebesgue measure of $K$ vanishes then, of course, 
$L^2(K)=\{0\}$.) With respect to this decomposition, let us define
\begin{equation}
R(K^c)={\x}(K^c)^{-1}\oplus 0
\text{ in }
L^2(\R^2)=L^2(K^c)\oplus L^2(K).
\label{b11}
\end{equation}
Clearly, for any $\lambda\not=0$
we have
\begin{equation}
\lambda\in\sigma({\x}(K^c)) \Leftrightarrow \lambda^{-1}\in\sigma(R(K^c))
\label{invariance}
\end{equation}
with the same multiplicity. Thus, it suffices for our
purposes to study the spectrum of the operator
$R(K^c)$. 

First note that in the ``free" case $K=\emptyset$ we have 
$R(\R^2)=\x_0^{-1}$ and the spectrum of $\x_0^{-1}$ 
consists of the eigenvalues $\Lambda_q^{-1}$ of infinite
multiplicity and their point of accumulation, zero. 

Next, it turns out (see section~\ref{ReductionToToeplitz}) that 
\begin{equation}
R(K^c)=\x_0^{-1}-W, 
\quad\text{where $W\geq0$ is compact.}
\label{b10}
\end{equation}
Thus, the Weyl's theorem on the invariance of the essential spectrum under compact perturbations
ensures that $\sigma_{ess}(R(K^c))=\sigma_{ess}(\x_0^{-1})$. 
Moreover, a simple operator theoretic argument (see e.g. \cite[Theorem~9.4.7]{BS})
shows that the eigenvalues
of $R(K^c)$ do not accumulate to the inverse Landau levels $\Lambda_q^{-1}$ 
from above. Thus, the spectrum of $R(K^c)$ consists of zero and the eigenvalue 
clusters $\{(\lambda_1^q)^{-1},(\lambda_2^q)^{-1},\dots \}$  
with the eigenvalues in the $q$'th cluster accumulating to $\Lambda_q^{-1}$. 
In section~\ref{accumulation} we show that the rate of accumulation of
$(\lambda_n^q)^{-1}$ to $\Lambda_q^{-1}$ can be described in terms of the spectral asymptotics of the 
Toeplitz type operator $P_q W P_q$; here $W$ is defined by \eqref{b10} 
and $P_q$ is the projection onto 
$\Lcc_q=\Ker({\x_0}-\Lambda_q)=\Ker (\x_0^{-1}-\Lambda_q^{-1})$.

The spectrum of $P_q W P_q$ is studied in sections~\ref{Toeplitz} and \ref{lastsec}, using 
the results of \cite{FilPush}.

\subsection{Acknowledgements}
A large part of this work was completed during the authors' stay at  the Isaac Newton Institute 
for Mathematical Sciences (Cambridge, UK) in the framework of the programme 
``Spectral Theory and Partial Differential Equations''. It is a pleasure to thank the Institute
and the organisers of the programme for providing this opportunity. The authors are also 
grateful to Uzy Smilansky for useful discussions.

\section{Some abstract results}\label{nonsense}
Here we collect some general operator theoretic
statements that are used in the proof. The statements
themselves, with the exception of the last one, are
 almost obvious, but spelling them out
explicitly helps explain the main ideas of our
construction.
\subsection{Quadratic forms}\label{qforms}
Our arguments    can be stated most succinctly
if we are allowed to deal with quadratic forms
whose domains are not necessarily dense in the Hilbert space.
 Here is the corresponding abstract framework;
related constructions appeared before in the literature; see e.g. \cite{Simon2}.

Let $a$ be a closed positive definite quadratic form
in a Hilbert space $\Hc$ with the domain $d[a]$. Let
the closure of $d[a]$ in $\Hc$ be $\Hc_a$. 
Then the form $a$ defines a
self-adjoint operator $A$ in $\Hc_a$.  Let
$J_a:\Hc_a\to \Hc$ be the natural embedding operator;
its adjoint $J_a^*: \Hc\to \Hc_a$ acts as the
orthogonal projection onto the subspace $\Hc_a$ of
$\Hc$. The operator  $J_a A^{-1} J_a^*$ in $\Hc$ can be considered as
the direct sum
$$
J_a A^{-1} J_a^*=A^{-1}\oplus 0
\text{ in the decomposition }
\Hc=\Hc_a\oplus \Hc_a^\perp;
$$
here we have in mind \eqref{b11}.
Now let $b$ be another closed positive definite form in $\Hc$ and let
$B$, $d[b]$, $\Hc_b$, $J_b$ be the corresponding
objects constructed for this form.
\begin{proposition}\label{th4}
Suppose that $d[b]\subset d[a]$ and $b[x,y]= a[x,y]$ for all $x,y\in d[b]$.
Then:

(i) $J_b B^{-1} J_b^*\leq J_a A^{-1} J_a^*$ on $\Hc$;

(ii) if $x\in d[b]\cap \Dom(A)$, then $x\in \Dom (B)$, $Bx=Ax$,
and $J_b B^{-1} J_b^* Ax=J_a A^{-1}J_a^*Ax$.
\end{proposition}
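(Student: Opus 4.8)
The plan is to reduce everything to the dual (``complementary energy'') variational description of the bounded positive operator $J_aA^{-1}J_a^*$. First I would establish that for every $f\in\Hc$,
\begin{equation*}
\langle J_aA^{-1}J_a^*f,f\rangle_\Hc=\sup_{u\in d[a]}\bigl(2\re\langle f,u\rangle_\Hc-a[u,u]\bigr),
\end{equation*}
together with the same formula with $b,B,d[b]$ in place of $a,A,d[a]$. The proof is a completion of the square: fixing $f$ and putting $g=J_a^*f\in\Hc_a$, one has, for $u\in d[a]$ and $u_0:=A^{-1}g\in d[a]$, the identity $a[u,u]-2\re\langle g,u\rangle_{\Hc_a}=a[u-u_0,u-u_0]-\langle A^{-1}g,g\rangle_{\Hc_a}$, so by positive definiteness the infimum over $d[a]$ equals $-\langle A^{-1}g,g\rangle_{\Hc_a}$, attained at $u_0$; since $\langle g,u\rangle_{\Hc_a}=\langle f,u\rangle_\Hc$ and $\langle J_aA^{-1}J_a^*f,f\rangle_\Hc=\langle A^{-1}g,g\rangle_{\Hc_a}$, this rearranges into the displayed formula.

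Part (i) is then immediate: the hypotheses $d[b]\subseteq d[a]$ and $b[u,u]=a[u,u]$ on $d[b]$ say that the supremum computing $\langle J_bB^{-1}J_b^*f,f\rangle_\Hc$ is the supremum of the \emph{same} functional $u\mapsto 2\re\langle f,u\rangle_\Hc-a[u,u]$ over the \emph{smaller} set $d[b]$; hence it does not exceed $\langle J_aA^{-1}J_a^*f,f\rangle_\Hc$. As $f\in\Hc$ is arbitrary and both operators are bounded and self-adjoint, this is exactly $J_bB^{-1}J_b^*\leq J_aA^{-1}J_a^*$.

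Part (ii) I would get directly from the definition of the operator attached to a form. Let $x\in d[b]\cap\Dom(A)$. For every $y\in d[b]\subseteq d[a]$ we have $b[x,y]=a[x,y]=\langle Ax,y\rangle_\Hc=\langle J_b^*Ax,y\rangle_{\Hc_b}$, the last step because $y\in\Hc_b$. So $y\mapsto b[x,y]$ is the bounded functional on $d[b]$ represented by $J_b^*Ax\in\Hc_b$, which by definition of $B$ means $x\in\Dom(B)$ and $Bx=J_b^*Ax$; knowing additionally that $Ax\in\Hc_b$ turns this into $Bx=Ax$. Finally, with $h:=Ax=Bx\in\Hc_b$ and using $J_b^*J_b=\mathrm{id}_{\Hc_b}$, $J_a^*J_a=\mathrm{id}_{\Hc_a}$, one computes $J_bB^{-1}J_b^*h=J_bB^{-1}h=J_bx=x$ and, identically, $J_aA^{-1}J_a^*h=x$; this is the last identity in (ii).

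The only step that is not a routine rearrangement of quadratic forms is the passage from $Bx=J_b^*Ax$ to $Bx=Ax$, which requires $Ax\in\Hc_b$, i.e.\ that $A$ does not move $x$ into $\Hc_b^\perp$. This is part of how the pair $(a,b)$ is set up; in the application to $\x(K^c)$ it is precisely where the \emph{local} (differential) character of $\x_0$ is used --- a function in $H^1_A(K^c)\cap\Dom(\x_0)$ vanishes together with its first derivatives almost everywhere on $K$, so $\x_0x$ is supported in $K^c$, i.e.\ $\x_0x\in\Hc_b=L^2(K^c)$. I expect this to be the (mild) crux; everything else is bookkeeping with the embeddings $J_a$ and $J_b$.
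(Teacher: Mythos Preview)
Your argument for (i) is correct but takes a different route from the paper. The paper reduces to $\Hc_a=\Hc$ and observes that the hypothesis gives $\norm{B^{1/2}x}=\norm{A^{1/2}J_bx}$ on $d[b]$, so $A^{1/2}J_bB^{-1/2}$ is an isometry on $\Hc_b$; dualising this contraction bound yields $J_bB^{-1}J_b^*\le A^{-1}$ directly. Your complementary-energy variational characterisation is arguably more transparent and avoids square roots; the paper's version is a couple of lines shorter once one is willing to manipulate the half-powers.

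For (ii) your derivation is essentially the paper's, but you add an unnecessary hypothesis at the end. Having established $Bx=J_b^*Ax$, the final identity follows at once \emph{without} assuming $Ax\in\Hc_b$: simply compute
\[
J_bB^{-1}J_b^*Ax=J_bB^{-1}(J_b^*Ax)=J_bB^{-1}(Bx)=J_bx=x,
\]
and likewise $J_aA^{-1}J_a^*Ax=x$. The condition $Ax\in\Hc_b$ is only needed to upgrade $Bx=J_b^*Ax$ to the literal equality $Bx=Ax$ in $\Hc$ (the paper asserts this without comment). Your remark that in the application this uses locality of $\x_0$ is a fair observation about that middle claim, but it is not a crux: the identity $J_bB^{-1}J_b^*Ax=J_aA^{-1}J_a^*Ax$, which is all that is ever used downstream, holds unconditionally.
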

\begin{proof}
It suffices to consider the case $\Hc_a=\Hc$.

(i)
The hypothesis implies
$$
b[x,x]=a[J_b x,J_b x]
\quad \text{ for all } x\in d[b].
$$
This can be recast as 
$\norm{B^{1/2}x}=\norm{A^{1/2}J_b x}$, $x\in d[b]$. 
It follows that the operator $A^{1/2} J_b B^{-1/2}$ is an isometry  on
$\Hc_b$ and therefore $A^{1/2} J_b B^{-1/2}J_b^*$
is a contraction on $\Hc$. 
By conjugation, we get that $\norm{J_b B^{-1/2}J_b^* A^{1/2} z}\leq \norm{z}$ 
for all $z\in d[a]$. The last statement is equivalent to $\norm{J_b B^{-1/2}J_b^*  u}\leq \norm{A^{-1/2}u}$ 
for all $u\in \Hc$, and so $ J_b B^{-1} J_b^*\leq A^{-1}$ as
required.

(ii) Let $y\in d[b]$; then
$$
b[x,y]=a[x,y]=(Ax,y),
$$
and so $x\in\Dom(B)$ and $Bx=Ax$. Next,
$J_a A^{-1}J_a^*Ax=A^{-1}Ax=x$, and
$$
J_b B^{-1}J_b^*Ax=J_b B^{-1}J_b^*Bx=J_bB^{-1}Bx=J_bx=x,
$$
which proves the required statement.
\end{proof}

\subsection{Shift in enumeration}\label{shift}
 The
asymptotics of the type discussed in
Theorems~\ref{th2} and \ref{th3} is independent of a
shift in the enumeration of eigenvalues. This is a
consequence of the following elementary fact. Let
$b_1\geq b_2\geq\dots$ be a sequence of positive
numbers such that $\limsup_{n \to\infty}[n!
b_n]^{1/n}<\infty$. Then for all $\ell\in\Z$,
\begin{equation}
\lim_{n\to\infty}\genfrac{\{}{\}}{0pt}{}{\sup}{\inf}[n!b_{n+\ell}]^{1/n}
=
\lim_{n\to\infty}\genfrac{\{}{\}}{0pt}{}{\sup}{\inf}[n!b_n]^{1/n}.
\label{a1}
\end{equation}

\subsection{Accumulation of eigenvalues}\label{accumulation}
Having in mind \eqref{b10}, let us consider the following general 
situation. 
Let $T$ be a self-adjoint operator and let $\Lambda$
be an isolated eigenvalue of $T$ of infinite
multiplicity with the corresponding eigenprojection
$P_\Lambda$. Let $\t>0$ be such that
$$
((\Lambda-2\t,\Lambda+2\t)\setminus\{\Lambda\})\cap
\sigma(T)=\emptyset.
$$
Next, let $W\geq0$ be a compact
operator; consider the spectrum of $T-W$.  
The Weyl's theorem on the invariance of
the essential spectrum under compact perturbations ensures that
$$
((\Lambda-2\t,\Lambda+2\t)\setminus\{\Lambda\})\cap
\sigma_{ess}(T-W)=\emptyset.
$$
Moreover, a simple argument  (see e.g.
\cite[Theorem~9.4.7]{BS}) shows that the eigenvalues
of $T-W$ do not accumulate to $\Lambda$ from above (i.e.
$(\Lambda,\Lambda+\epsilon)\cap \sigma(T-W)=\emptyset$
for some $\epsilon>0$).

We will need a description of the eigenvalues of $T-W$
below $\Lambda$ in terms of the eigenvalues of the
Toeplitz operator $P_\Lambda W P_\Lambda$. Let
$\mu_1\geq\mu_2\geq\cdots$ be the eigenvalues of
$P_\Lambda  W P_\Lambda$; in order to exclude
degenerate cases, let us assume that this operator has
infinite rank. Let $\lambda_1\leq \lambda_2\leq\cdots$
be the eigenvalues of $T-W$ in the interval
$(\Lambda-\tau,\Lambda)$.

\begin{proposition}\label{th5}
Under the above assumptions, for any $\epsilon>0$ there exists $\ell\in\Z$
such that for all sufficiently large $n$, one has
$$
(1-\epsilon)\mu_{n+\ell}\leq\Lambda-\lambda_n\leq (1+\epsilon)\mu_{n-\ell}.
$$
\end{proposition}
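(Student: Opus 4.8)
The plan is to reduce the two-sided comparison between the eigenvalues $\Lambda-\lambda_n$ of $T-W$ below $\Lambda$ and the eigenvalues $\mu_n$ of the Toeplitz operator $P_\Lambda W P_\Lambda$ to a pair of rank-estimate inequalities, obtained by a Birman--Schwinger/minimax argument localized near $\Lambda$. Fix $s\in(0,\tau)$ and consider the counting function $\mathcal{N}(s):=\#\{n:\Lambda-\lambda_n\geq s\}$, i.e.\ the number of eigenvalues of $T-W$ in $(\Lambda-\tau,\Lambda-s]$. The spectral gap assumption means that in the interval $(\Lambda-\tau,\Lambda-s]$ the operator $T$ has no spectrum except possibly $\{\Lambda\}$, which is excluded; hence $T-s$ is invertible on the relevant spectral subspace and one can write the eigenvalue condition $(T-W)u=(\Lambda-s)u$ as a fixed-point equation for the compact operator $W$, with the ``free'' resolvent $(T-(\Lambda-s))^{-1}$ acting boundedly away from $\Lambda$ but with a single pole of infinite multiplicity at $\Lambda$.

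The key step is to show that, up to a bounded shift in the index and a multiplicative error $1\pm\epsilon$, counting eigenvalues of $T-W$ at distance $\geq s$ below $\Lambda$ is the same as counting eigenvalues of $P_\Lambda W P_\Lambda$ that are $\geq s$. More precisely I would split $\mathcal{H}=\Ran P_\Lambda\oplus\Ran(\1-P_\Lambda)$ and use the Feshbach/Schur complement: an eigenvalue $\Lambda-s$ of $T-W$ corresponds, via the Schur complement with respect to $\Ran P_\Lambda$, to a zero of
\[
M(s)=P_\Lambda\Bigl(W - W(\1-P_\Lambda)\bigl((T-W)-(\Lambda-s)\bigr)^{-1}(\1-P_\Lambda)W\Bigr)P_\Lambda - sP_\Lambda ,
\]
where the inner resolvent is bounded uniformly for small $s$ because $(\1-P_\Lambda)(T-(\Lambda-s))$ is boundedly invertible on $\Ran(\1-P_\Lambda)$ and $W$ is compact. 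Thus $M(s)=P_\Lambda W P_\Lambda - sP_\Lambda + E(s)$, where $E(s)$ is compact, self-adjoint, and $\|E(s)\|\to 0$ as $s\to 0$; moreover $E(s)$ has a more quantitative smallness, $E(s)=O(\|W(\1-P_\Lambda)\cdots\|)$, which is what produces the \emph{multiplicative} rather than additive error once one notes $P_\Lambda W P_\Lambda\geq 0$. Counting eigenvalues then reduces, by the Birman--Schwinger principle and monotonicity, to comparing the counting functions of $P_\Lambda W P_\Lambda - sP_\Lambda + E(s)$ and of $P_\Lambda W P_\Lambda - sP_\Lambda$; Weyl's inequality for eigenvalues of sums gives, for the $n$-th eigenvalue, $\mu_{n+k}-o(1)\leq (\text{$n$-th eigenvalue of }P_\Lambda W P_\Lambda+E(s))\leq \mu_{n-k}+o(1)$ with $k=\#\{$eigenvalues of $E(s)$ exceeding a threshold$\}$, and this $k$ stays bounded as $s\to 0$ because $\|E(s)\|\to 0$. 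Feeding this back through the Birman--Schwinger correspondence yields precisely the claimed inequality
\[
(1-\epsilon)\mu_{n+\ell}\leq \Lambda-\lambda_n\leq(1+\epsilon)\mu_{n-\ell}
\]
for a suitable $\ell=\ell(\epsilon)$ and all large $n$.

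The main obstacle I expect is making the passage from an \emph{additive} error $\|E(s)\|=o(1)$ to the \emph{multiplicative} error $(1\pm\epsilon)$ clean: an additive bound alone would only give $|\Lambda-\lambda_n-\mu_{n\mp\ell}|=o(1)$, which is useless since both sides tend to $0$. The resolution is to exploit that $E(s)$ is not merely small but is controlled by the same resolvent data that degenerates only near $\Lambda$, combined with the fact that on the range of the spectral projection of $P_\Lambda W P_\Lambda+E(s)$ for eigenvalues near $s$, the unperturbed operator $P_\Lambda W P_\Lambda$ is itself of size comparable to $s$; one then estimates $E(s)$ relative to $P_\Lambda W P_\Lambda$ on that subspace, i.e.\ establishes $\pm E(s)\leq \epsilon\, P_\Lambda W P_\Lambda$ when restricted appropriately, and applies the minimax comparison to $(1\mp\epsilon)P_\Lambda W P_\Lambda$ instead of to $P_\Lambda W P_\Lambda\pm E(s)$. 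Everything else---the Schur complement, the Birman--Schwinger reduction, and the bounded index shift---is standard; the reference \cite[Theorem~9.4.7]{BS} already supplies the non-accumulation from above that makes the enumeration $\lambda_1\leq\lambda_2\leq\cdots$ meaningful, and the shift-invariance lemma of \S\ref{shift} is what allows us not to track $\ell$ precisely.
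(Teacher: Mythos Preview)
Your Feshbach/Schur-complement route is different from the paper's and could be made to work, but as written there is a genuine gap. Your claim that $\|E(s)\|\to 0$ as $s\to 0$ is false: the correction term
\[
E(s)=-P_\Lambda W Q_\Lambda\bigl[Q_\Lambda(T-W-(\Lambda-s))Q_\Lambda\bigr]^{-1}Q_\Lambda W P_\Lambda
\]
converges (where defined) to a \emph{nonzero} compact operator $E(0)$, because the inner resolvent stays bounded rather than vanishing. You then pivot to the correct target, a relative bound $\pm E(s)\leq\epsilon\,P_\Lambda W P_\Lambda$ ``when restricted appropriately'', but give no mechanism for it. The way to actually obtain this is to factor $E(s)=P_\Lambda W^{1/2}M(s)W^{1/2}P_\Lambda$ with $M(s)=-W^{1/2}Q_\Lambda[\cdots]^{-1}Q_\Lambda W^{1/2}$ compact and norm-continuous in $s$, then split $M(0)$ as a piece of norm $<\epsilon$ plus finite rank; this yields $\pm E(s)\leq\epsilon\,P_\Lambda W P_\Lambda+F_\epsilon$ with $\rank F_\epsilon<\infty$ for all small $s$, which is what is really needed. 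A second point you gloss over: the Feshbach reduction gives an $s$-dependent effective operator, i.e.\ a nonlinear eigenvalue problem, so ``Weyl's inequality for eigenvalues of sums'' does not apply directly to the counting functions; a monotonicity argument has to be supplied.

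For comparison, the paper sidesteps all of this with a one-line algebraic trick (attributed to Iwatsuka--Tamura). Since $W\geq 0$,
\[
R_\pm:=\Bigl(\sqrt{\epsilon}\,P_\Lambda\pm\tfrac{1}{\sqrt{\epsilon}}\,Q_\Lambda\Bigr)W\Bigl(\sqrt{\epsilon}\,P_\Lambda\pm\tfrac{1}{\sqrt{\epsilon}}\,Q_\Lambda\Bigr)\geq 0,
\]
and expanding gives $T-W=S_+ +R_-=S_- -R_+$, where $S_\pm$ is \emph{block-diagonal}: its $P_\Lambda$-block is $\Lambda P_\Lambda-(1\pm\epsilon)P_\Lambda W P_\Lambda$, and its $Q_\Lambda$-block has only finitely many eigenvalues in $(\Lambda-\tau,\Lambda)$. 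The multiplicative factors $(1\pm\epsilon)$ thus appear immediately, with no Schur complement and no $s$-dependence; the compact, sign-definite remainders $R_\pm$ are handled by the standard small-plus-finite-rank decomposition and a counting-function estimate. Your approach, once repaired, buys nothing extra here and is considerably longer.
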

The proof borrows its key element from \cite[Lemma~1.1]{IwaTam}. 
An alternative proof can be found in   \cite[Proposition~4.1]{RaiWar}.

\begin{proof}

1.
We denote $S=T-W$ and $Q_\Lambda=I-P_\Lambda$
and  consider the operators
$$
R_\pm
=
\epsilon P_\Lambda W P_\Lambda +\frac1{\epsilon} Q_\Lambda W Q_\Lambda
\pm
(P_\Lambda W Q_\Lambda+Q_\Lambda W P_\Lambda).
$$
and
$$
S_\pm = P_\Lambda(T-(1\pm\epsilon)W)P_\Lambda 
+
Q_\Lambda(T-(1\pm\frac1\epsilon)W)Q_\Lambda.
$$
We have 
$$
S=S_++R_-=S_--R_+.
$$

2. 
Since $W$ is compact, the operators $R_\pm$ are also compact.
Since  $R_\pm$ can be represented as
$$
R_\pm=(\sqrt{\epsilon} P_\Lambda\pm \frac1{\sqrt{\epsilon}} Q_\Lambda)
W(\sqrt{\epsilon} P_\Lambda\pm \frac1{\sqrt{\epsilon}} Q_\Lambda)
$$
and $W\geq0$, we see that $R_\pm\geq0$.

3. Let us discuss the spectrum of $S_\pm$ in
$(\Lambda-\tau, \Lambda)$. Clearly, the spectrum of
$P_\Lambda(T-(1\pm\epsilon)W)P_\Lambda
=\Lambda P_\Lambda-(1\pm\epsilon)P_\Lambda WP_\Lambda$ 
consists of the eigenvalues $\Lambda-(1\pm\epsilon)\mu_n$. Next,
since by assumption, $T\mid_{\Ran Q_\Lambda}$ has no
spectrum in $(\Lambda-2\t,\Lambda+2\t)$ and $W$
is compact, we see that
$Q_\Lambda(T-(1\pm\frac1\epsilon)W)Q_\Lambda
\mid_{\Ran Q_\Lambda}$ has only finitely many eigenvalues in the
interval $(\Lambda-\tau,\Lambda+\tau)$. Since the operators
$P_\Lambda(T-(1\pm\epsilon)W)P_\Lambda$ and
$Q_\Lambda(T-(1\pm\frac1\epsilon)W)Q_\Lambda$ act in
orthogonal subspaces of our Hilbert space, the
spectrum of $S_\pm$ is the union of the spectra of
these operators.

So we arrive at the following conclusion. Let
$\nu^\pm_1\leq \nu^\pm_2\leq\cdots$ denote the
eigenvalues of $S_\pm$ in $(\Lambda-\tau, \Lambda)$.
Then
\begin{equation}
\nu^+_n=\Lambda-(1+\epsilon)\mu_{n-i},
\quad
\nu^-_n=\Lambda-(1-\epsilon)\mu_{n-j},
\label{b3}
\end{equation}
for some integers $i,j$ and all
sufficiently large $n$.

4. Let us prove that $\lambda_n\leq \nu_{n+k}^-$ for
some integer $k$\ and all sufficiently large $n$. 
Denote $\delta=(\lambda_1-\Lambda+\tau)/2$ and let us write 
$R_+=R_+^{(1)}+R_+^{(2)}$, where $0\leq R_+^{(1)}\leq \delta I$
and  $\rank R^{(2)}_-<\infty$. 
Denote by $N_S(\alpha,\beta)$ the number
of eigenvalues of $S$ in the interval $(\alpha,\beta)$. 
Writing $S=S_- -R_+^{(1)}-R_+^{(2)}$,  we get for any 
$\lambda\in(\lambda_1,\Lambda)$:
\begin{multline*}
N_S(\Lambda-\tau,\lambda)
=
N_S(\lambda_1-2\delta,\lambda)
\\
\geq
N_{S_--R_+^{(1)}}(\lambda_1-2\delta,\lambda)
-\rank R^{(2)}_- 
\geq 
N_{S_-}(\lambda_1-\delta, \lambda)-\rank R^{(2)}_-.
\end{multline*}
The second inequality above follows from $\sigma(R_+^{(1)})\subset[0,\delta]$
(see \cite[Lemma~9.4.3]{BS}).
These inequalities for the eigenvalue counting functions can be rewritten
as $\lambda_n\leq \nu_{n+k}^-$ with some integer $k$. 

In the same way, one proves that $\lambda_n\geq \nu_{n-k}^+$ for large $n$ and some
integer $k$. Taken together with \eqref{b3}, this yields the required result.
\end{proof}

\section{Preliminaries and reduction to Toeplitz operators}\label{ReductionToToeplitz}
Let $K\subset \R^2$ be a compact set; 
we return to the discussion of the spectrum of  ${\x}(K^c)$ and start
with some general remarks.

First we  would like to point out that the spectral asymptotics that 
we are interested in is independent of the ``holes" in
the domain $K$:
\begin{equation}
\delta_q(K)=\delta_q(Pc(K)), 
\quad
\Delta_q(K)=\Delta_q(Pc(K)).
\label{holes}
\end{equation}
Indeed, let us write $K^c=\Omega\cup \SS$, 
where $\Omega$ is the unbounded
connected component of $K^c$ and $\Omega$ and $\SS$
are disjoint. With respect to the direct sum decomposition 
$L^2(K^c)=L^2(\Omega)\oplus L^2(\SS)$, we have
${\x}(K^c)={\x}(\Omega)\oplus {\x}(\SS)$. By the compactness of
the embedding $H^{1}_A(\SS)\subset L^2(\SS)$, the
operator ${\x}(\SS)$ has a compact resolvent. Thus, on
any bounded interval of the real line the spectrum of ${\x}(K^c)$ differs from the
spectrum of ${\x}(\Omega)$  by at most finitely many eigenvalues.
By \eqref{a1}, this yields \eqref{holes}.

Next, we apply the abstract reasoning of section~\ref{qforms}
to the quadratic form $a[u]=\|u\|^2_{H^1_A(K^c)}$ with
domain $d[a]=H^{1}_A(K^c)$, considering $L^2(\R^2)$
as the main Hilbert space $\Hc$.
We  consider the operator $R(K^c)$ (see \eqref{b11})
and write $R(K^c)=\x_0^{-1}-W$. 
Proposition~\ref{th5} suggests that in order to
find the rate of accumulation of the eigenvalues of $R(K^c)$ 
to $\Lambda_q^{-1}$, one should
study the spectrum of the Toeplitz type operators
$P_qWP_q$. This is done in the next section. Denote by
$\mu^q_1\geq\mu^q_2\geq\dots$ the eigenvalues of
$P_qWP_q$. We will prove
\begin{proposition}\label{th6}
Let $K\subset\R^2$ be a compact set and $q\geq0$. Then
\begin{align*}
\limsup_{n\to\infty}(n! \mu^q_n)^{1/n}
&\leq
\frac{B}2 (\CapB(K))^2,
\\
\liminf_{n\to\infty}(n! \mu^q_n)^{1/n}
&\geq
\frac{B}2 (\CapB_-(K))^2.
\end{align*}
If $K$ is a $C^\infty$ smooth curve, then one has
$$
\lim_{n\to\infty}(n! \mu^q_n)^{1/n}=
\frac{B}2 (\CapB(K))^2.
$$
\end{proposition}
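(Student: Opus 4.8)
The plan is to reduce the spectral analysis of the Toeplitz operator $P_q W P_q$ to an explicit operator acting on a space of entire (or poly-entire) functions, and then to read off the eigenvalue asymptotics from known results on logarithmic capacity. First I would make the operator $W$ from \eqref{b10} concrete. Since $R(K^c)=\x(K^c)^{-1}\oplus 0$ and $\x_0^{-1}$ differ by $W\geq 0$, and since by Proposition~\ref{th4}(i)--(ii) the form inequality $J_b B^{-1}J_b^*\leq J_a A^{-1}J_a^*$ becomes an equality on the range of $A$ restricted to the common form domain, $W$ is supported (in the quadratic-form sense) on the ``obstacle part'' $K$: for $u$ in the form domain of $\x_0$, $(Wu,u)$ depends only on the behaviour of $u$ near $K$. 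Concretely I expect $W = \x_0^{-1} - \x(K^c)^{-1}\oplus 0$ to act, after sandwiching by $P_q$, essentially as integration against a measure or capacitary distribution living on $Pc(K)$; by \eqref{holes} and the remarks preceding Proposition~\ref{th6} we may replace $K$ by $Pc(K)$ throughout.

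Next I would use the explicit description of $\Lcc_q=\Ran P_q$ promised in Section~\ref{subspaces}: for $q=0$ this is the Fock–Bargmann space of entire functions $f(z)$ with $\int |f(z)|^2 e^{-B|z|^2/2}\,dm(z)<\infty$ (times a fixed Gaussian factor), and for general $q$ one gets a unitarily equivalent model (poly-analytic functions, or analytic functions shifted by a polynomial weight via the creation operator). Under this identification $P_q W P_q$ becomes a Toeplitz-type operator $T_\mu$ on the Fock space with ``symbol'' a capacitary measure $\mu$ on $Pc(K)$, whose matrix entries in the monomial basis $\{z^n\}$ are moments $\int \bar z^m z^n\,d\mu(z)$ up to the Gaussian normalisation. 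The key point is then that the $n$th eigenvalue of such an operator behaves, to leading exponential order, like $\mathrm{Cap}(K)^{2n}/(something)$, with the $1/n!$ coming from the normalising constants $\|z^n\|^{-2}$ in the Fock space, which grow like $n!\,(2/B)^n$. This is exactly the type of estimate carried out in \cite{FilPush}, and I would invoke their results: the upper bound $\limsup (n!\mu_n^q)^{1/n}\leq \tfrac{B}{2}(\CapB K)^2$ follows from an upper bound on the moments together with a comparison of $T_\mu$ with a rank-estimate using the equilibrium potential; the lower bound with $\CapB_-$ comes from testing $T_\mu$ against polynomials supported on an inscribed Lipschitz subdomain, where one has genuine two-sided control; and the sharp equality for a smooth curve uses the precise asymptotics of the capacity of a curve plus the fact that for a curve $\CapB_-$ is not available but a direct parametrisation argument (again as in \cite{FilPush}) gives matching bounds.

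The main obstacle I anticipate is \emph{identifying $W$ precisely enough} to match it with a capacitary object on $K$, especially when $K$ has empty interior (a curve, or worse): here $L^2(K)=\{0\}$, so $W=\x_0^{-1}-\x(K^c)^{-1}$ is a difference of resolvents of operators on the \emph{same} $L^2$, and its positivity and compactness are clear from \eqref{b10}, but extracting that $P_q W P_q$ is comparable to $T_\mu$ for a capacitary $\mu$ requires a trace/potential-theoretic computation — essentially showing that the quadratic form $(Wu,u)$ for $u\in\Lcc_q$ is comparable to the $H^1_A$-capacity of $K$ relative to $u$, i.e. to $\inf\{\|u-v\|_{H^1_A}^2 : v\in H^1_A(K^c)\}$. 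Making this comparison two-sided, and tracking the constant $\tfrac{B}{2}$ exactly, is where the real work lies; the passage from there to the stated $\limsup$/$\liminf$ bounds is then a citation to the capacity estimates of \cite{FilPush} combined with the Fock-space normalisation constants. The second and third statements of the proposition (with $\CapB_-$, and the sharp value for a smooth curve) I expect to be proved in Sections~\ref{Toeplitz} and \ref{lastsec} respectively, the curve case being the one where both bounds can be pushed to coincide.
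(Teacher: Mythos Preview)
Your overall strategy---reduce $P_qWP_q$ to a Toeplitz-type operator and invoke \cite{FilPush}---matches the paper's, but the proposed mechanism for the reduction has a genuine gap, and the ``capacitary measure'' idea is a wrong turn.

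The paper never identifies $W$ with an operator whose symbol is a capacitary distribution. Instead it proves two operator inequalities of the form
\[
c\,T_{q'}(\mu_1)+F_1\;\le\;P_qWP_q\;\le\;C\,T_{q'}(\mu_2)+F_2,
\]
where $F_1,F_2$ are finite rank and the $\mu_i$ are \emph{plain} measures: Lebesgue measure on a Lipschitz neighbourhood $U\supset K$ for the upper bound, Lebesgue measure on a Lipschitz subdomain $K_1\subset K$ (or arc length on the curve) for the lower bound. Capacity never enters as a symbol; it appears only at the end via Proposition~\ref{prop1} (which \emph{is} the content imported from \cite{FilPush}), and one then lets $U\downarrow K$ or takes the sup over $K_1\subset K$ using continuity of capacity. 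So trying to ``track the constant $\tfrac{B}{2}$ exactly'' through a capacitary identification of $W$ is unnecessary and unlikely to succeed.

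For the \emph{upper} bound the idea you are missing is how to localise $W$. Your variational characterisation $(Wu,u)\approx\inf_{v\in H^1_A(K^c)}\|u-v\|_{H^1_A}^2$ is not used. The paper exploits Proposition~\ref{th4}(ii) directly: with $\tilde\omega=1-\omega$, $\omega\in C_0^\infty(U)$, $\omega|_K=1$, the function $\tilde\omega P_q\psi$ lies in $\Dom(\x_0)\cap d[\x(K^c)]$, so $W\x_0\tilde\omega P_q\psi=0$. Hence $(WP_q\psi,P_q\psi)=\Lambda_q^{-2}(W\x_0\omega P_q\psi,\x_0\omega P_q\psi)$, and then the crude bound $W\le\x_0^{-1}$ yields $(WP_q\psi,P_q\psi)\le\Lambda_q^{-2}\|\omega P_q\psi\|_{H^1_A}^2$. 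A short creation-operator computation plus an embedding-compactness argument converts this into $P_qWP_q\le C\,P_{q+1}\chi_U P_{q+1}+F$.

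For the \emph{lower} bound the missing ingredient is an intermediate operator. One cannot compare $W$ with $T_q(\mu)$ from below just by ``testing against polynomials'', because there is no explicit formula for $W$. The paper inserts $\tilde\x$ defined by the form $\|u\|_{H^1_A}^2+\int|u|^2\,d\mu$ on the \emph{full} domain $H^1_A(\R^2)$. Proposition~\ref{th4}(i) then gives $\x_0^{-1}\ge\tilde\x^{-1}\ge R(K^c)$, so $P_qWP_q\ge P_q(\x_0^{-1}-\tilde\x^{-1})P_q$; and for $\tilde\x$ one has an explicit resolvent formula showing the right-hand side dominates $\tfrac{1}{2\Lambda_q}T_q(\mu)$ modulo finite rank. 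Without this intermediate $\tilde\x$ your route stalls precisely where you anticipated---at making the comparison two-sided.
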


Now we can prove our main statements. 
\begin{proof}[Proof of Theorem~\protect\ref{th1} and Theorem~\protect\ref{th2}]
Combining  Proposition~\ref{th6}, Proposition~\ref{th5} and \eqref{holes}, 
we get the estimates for the quantities
\begin{align*}
\lim_{n\to\infty}\sup[n!(\Lambda_q^{-1}-(\lambda_n^q)^{-1})]^{1/n}
&\leq 
\frac{B}2 (\CapB(K))^2,
\\
\lim_{n\to\infty}\inf[n!(\Lambda_q^{-1}-(\lambda_n^q)^{-1})]^{1/n}
&\geq 
\frac{B}2 (\CapB_-(Pc(K))^2
\end{align*}
for any compact $K$. If $K$ is a $C^\infty$ smooth curve, we get
$$
\lim_{n\to\infty}[n!(\Lambda_q^{-1}-(\lambda_n^q)^{-1})]^{1/n}
=
\frac{B}2 (\CapB(K))^2.
$$
An elementary argument shows that
$$
\lim_{n\to\infty}\genfrac{\{}{\}}{0pt}{}{\sup}{\inf}[n!(\Lambda_q^{-1}-(\lambda_n^q)^{-1})]^{1/n}
=
\lim_{n\to\infty}\genfrac{\{}{\}}{0pt}{}{\sup}{\inf}[n!(\lambda_n^q-\Lambda_q)]^{1/n}.
$$
This yields the required statements.
\end{proof}

\begin{proof}[Proof of \eqref{b10}]

Let $D$ be a disc such that $K\subset D$. By Proposition~\ref{th4}(i), we get
$$
D^c\subset K^c \subset \R^2
\Rightarrow
R(D^c) \leq R(K^c) \leq \x_0^{-1}
$$
and so 
\begin{equation}
0\leq \x_0^{-1}-R(K^c)\leq \x_0^{-1}-R(D^c). 
\label{b12}
\end{equation}
Thus, $W=\x_0^{-1}-R(K^c)$ is non-negative; 
 let us address compactness. 
 
It is well known that if $0\leq V_1\leq V_2$ are self-adjoint operators and
$V_2$ is compact, then $V_1$ is also compact. 
Thus, by \eqref{b12}, in order to prove the compactness of $W$,
it suffices to check that $\x_0^{-1}-R(D^c)$ is compact.

Let $\G=\partial D$. 
Employing the same arguments as in the proof of \eqref{holes}, 
we see that
${\x}(\Gamma^c)^{-1}-R(D^c)$ is the inverse of the magnetic operator on the disc and
hence a  compact operator. Thus, it suffices to prove that the difference
$$
\x_0^{-1}-{\x}(\Gamma^c)^{-1}
=
(\x_0^{-1}-R(D^c))
-
({\x}(\Gamma^c)^{-1}-R(D^c))
$$ 
is compact.

Let us compute the quadratic form of this difference.
Let $f,g\in L^2(\R^2)$, $\x_0^{-1}f=u$, ${\x}(\Gamma^c)^{-1}g=v$.
We have
$$
((\x_0^{-1}-{\x}(\Gamma^c)^{-1})f,g)
=
(u,{\x}(\Gamma^c)v)-({\x_0}u,v).
$$
Integrating by parts and noting that $v\in\Dom ({\x}(\Gamma^c))$ vanishes on
$\Gamma$, we get
\begin{equation}\label{3:q-form}
(u,{\x}(\Gamma^c)v)-({\x_0} u,v) 
= 
\int_{\Gamma}(n_A v(s)^+ + n_A v(s)^-) u(s)ds
\end{equation}
where $n_A v(s)=(\nabla-iA(s))v\cdot\nb(s)$, 
$\nb(s)$ is the exterior normal to
$\G$ at the point $s$ and the superscripts
$+$ and $-$ indicate that the limits of the functions
are taken on the circle $\G$ by approaching it from
the outside or inside. 

Take a smooth cut-off function
$\omega\in C^\infty_0(\R^2)$ such that $\omega(x)=1$ in the
neighborhood of $D$. Then we can replace $u,v$ by
$u_1=\omega u$, $v_1=\omega v$ in the r.h.s. of \eqref{3:q-form}. 
By the local elliptic regularity  we
have $u_1\in H^2(\R^2)$, $v_1\in H^2( \G^c)$, 
and the corresponding Sobolev norms of $u_1$, $v_1$ 
can be estimated  via the $L^2$-norms of $f,g$. 
Now it remains to notice that
the trace mapping $u_1\mapsto u_1|_\G$ is compact as
considered from $H^2(\R^2)$ to $L^2(\G)$,
and the mappings $v_1\mapsto (n_A v_1)^\pm$ 
are compact as considered from $H^2(\G^c)$ to $L^2(\G)$. 
It follows that the difference $\x_0^{-1}-{\x}(\Gamma^c)^{-1}$ is compact, 
as required. 
\end{proof}

\section{The spectrum of Toeplitz operators}\label{Toeplitz}

\subsection{Restriction operators and the associated Toeplitz operators}\label{restriction}
Let $\mu$ be a finite measure in $\R^2$ with a compact
support. Consider the restriction operator 
$$
\gamma_0: C_0^\infty(\R^2)\ni u\mapsto u\mid_{supp(\mu)}\in
L^2(\mu).
$$
We are interested in two
special cases, namely when $\mu$ is the restriction of
the Lebesgue measure to a set with Lipschitz boundary
and when $\mu$ is the arc length measure on a simple smooth
curve. In both cases $\gamma_0$ can be extended by continuity
to a bounded and compact operator $\gamma: H^1_A(\R^2)\to L^2(\mu)$.

Next, let $J: H^1_A(\R^2)\to L^2(\R^2)$ be the embedding operator, 
$J:u\mapsto u$. Then the adjoint $J^*: L^2(\R^2)\to H^1_A(\R^2)$ acts as 
$J^*:u\mapsto {\x_0}^{-1/2}u$. 

For $q\geq0$, consider the operators $T_q(\mu)$ in $L^2(\R^2)$ defined by  the quadratic form 
$$
(T_q(\mu)u,u)_{L^2(\R^2)}
=
\int_{\supp \mu} \abs{(P_q u)(x)}^2 d\mu(x), 
\quad u\in L^2(\R^2). 
$$
This operator can be represented as 
$$
T_q(\mu)=(\gamma J^* {\x_0}^{1/2}P_q)^* (\gamma J^* {\x_0}^{1/2}P_q)
=
\Lambda_q (\gamma J^* P_q)^*(\gamma J^* P_q). 
$$
Since $\gamma$ is compact by assumption, the operator $T_q(\mu)$ is also compact.

Fix $q\geq0$; let $s^q_1\geq s^q_2\geq\dots$ be the
eigenvalues of  $T_q(\mu)$ in $L^2(\R^2)$.

\begin{proposition}\label{prop1}
(i) Let $\mu$ be the restriction of the Lebesgue measure onto
a bounded domain $K\subset \R^2$ with  a Lipschitz boundary.
Then
$$
\lim_{n\to\infty}(n! s_n^q)^{1/n}
=
\frac{B}{2} (\CapB(K))^2.
$$
(ii) Let $\mu$ be the arc length measure of a  $C^\infty$ smooth curve.
Then
$$
\lim_{n\to\infty}(n! s_n^q)^{1/n}
=
\frac{B}{2} (\CapB(\supp \mu))^2.
$$
\end{proposition}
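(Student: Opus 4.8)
The plan is to reduce the computation of the spectral asymptotics of $T_q(\mu)$ to the $q=0$ case, where the results of \cite{FilPush} apply directly. Recall that $\Lcc_q$ admits an explicit description (to be given in Section~\ref{subspaces}): up to the Gaussian weight $e^{-B\abs{z}^2/4}$, functions in $\Lcc_q$ are obtained from functions in $\Lcc_0$ by applying a fixed polynomial-coefficient differential operator (a power of a creation-type operator), and $\Lcc_0$ itself consists of $e^{-B\abs{z}^2/4}$ times entire functions. The key point is that the spectral asymptotics $\lim_n (n!s_n^q)^{1/n}$ is governed only by the exponential decay rate of the Gaussian factor, which is the same for every $q$; passing from $q$ to $q=0$ only changes the measure $\mu$ by a bounded, locally smooth density and composes the relevant operator with a differential operator of fixed order, neither of which affects the quantity $\lim_n(n!s_n^q)^{1/n}$. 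I would make this precise by the standard argument that if $T_q(\mu)$ and $T_0(\mu)$ are related by $T_q(\mu)=G^*T_0(\nu)G+(\text{lower-order})$ with $G$ bounded with bounded inverse on a finite-codimension subspace and $\nu$ comparable to $\mu$, then the two sequences have the same $n$-th root asymptotics after a finite shift in enumeration, which by \eqref{a1} does not matter.

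Having reduced to $q=0$, I would identify $T_0(\mu)$ with the Toeplitz operator acting on the Fock-type (Bargmann) space. Concretely, the map $u\mapsto P_0 u$ followed by stripping the Gaussian exponential realizes $\Lcc_0$ as the Fock space $\mathcal{F}$ of entire functions square-integrable against $e^{-B\abs{z}^2/2}dm(z)$, and under this unitary equivalence $T_0(\mu)$ becomes the Toeplitz operator $\mathrm{Toep}(\phi_\mu)$ with symbol the measure $e^{-B\abs{z}^2/2}d\mu(z)$. Its eigenvalues $s_n^0$ are exactly the objects whose asymptotics are computed in \cite{FilPush}: for a measure supported on a compact $K$ that is either (the indicator of) a Lipschitz domain or arc length on a smooth curve, \cite{FilPush} gives $\lim_n (n!s_n^0)^{1/n}=\frac{B}{2}(\CapB(K))^2$, the logarithmic capacity entering through the equilibrium potential / transfinite diameter characterization of how fast the monomials $z^n$ (an almost-orthogonal basis adapted to $\mathcal{F}$) decorrelate from the support of $\mu$. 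I would quote this as the analytic input rather than reprove it.

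In more detail, the steps in order are: (1) write down the explicit basis of $\Lcc_q$ and the intertwining operator between $\Lcc_q$ and $\Lcc_0$; (2) use it to show $T_q(\mu)$ is, modulo a finite-rank correction and bounded invertible conjugation, spectrally equivalent to $T_0(\tilde\mu)$ for a measure $\tilde\mu$ of the same type with the same support; (3) identify $T_0(\tilde\mu)$ with a Fock-space Toeplitz operator; (4) invoke the capacity asymptotics from \cite{FilPush} for the two cases (Lipschitz domain, smooth curve); (5) combine with the shift-invariance \eqref{a1} to conclude $\lim_n(n!s_n^q)^{1/n}=\frac{B}{2}(\CapB(K))^2$ (resp. with $\supp\mu$). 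For part (i) one should note $\CapB$ of the domain and of its support agree.

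The main obstacle I expect is step (2): controlling precisely how the change of Landau level alters the Toeplitz symbol. Applying the creation operator introduces polynomial weights and, more seriously, derivatives, so $T_q(\mu)$ is not literally $T_0$ of a weighted measure — one gets a sum of Toeplitz-type operators with symbols involving $\mu$ and its "derivatives" against the test function. The resolution is that on the Fock side, multiplication by a polynomial and the creation/annihilation operators are, after restricting to the complement of a finite-dimensional subspace, two-sided bounded, so they perturb the eigenvalue sequence only by a bounded factor and a finite shift; since we take $n$-th roots and a limit, bounded factors disappear and finite shifts are absorbed by \eqref{a1}. Making this perturbation bookkeeping rigorous — especially verifying that the "lower-order" terms genuinely contribute only a finite-rank or relatively small correction in the sense needed — is the technically delicate part; everything else is either explicit Landau-level algebra or a citation to \cite{FilPush}.
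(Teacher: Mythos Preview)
Your overall strategy---reduce $T_q(\mu)$ to $T_0(\mu)$ via the creation-operator description of $\Lcc_q$, then quote \cite{FilPush}---is exactly what the paper does, and for part~(i) both you and the paper simply cite \cite{FilPush}. The difficulty, as you anticipated, is entirely in part~(ii) for $q\ge1$, and here your proposed resolution has a genuine gap.

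The claim that ``creation/annihilation operators are, after restricting to the complement of a finite-dimensional subspace, two-sided bounded'' is false as stated, and more importantly the mechanism you suggest does not yield the \emph{upper} bound. Under the unitary equivalence \eqref{unitary} one has $(T_q(\mu)u,u)=C_q^{-2}\norm{(2\partial-B\bar z)^q f}^2_{L^2(\tilde\mu)}$, and for the upper bound you would need something like $\norm{\partial^k f}_{L^2(\tilde\mu)}\le C\norm{f}_{L^2(\tilde\mu)}$ on a subspace of finite codimension. But derivatives of entire functions on a curve are \emph{not} controlled by their values on the same curve, no matter how many dimensions you throw away. The paper sidesteps this with a different idea: it enlarges the curve $\Gamma$ to a two-dimensional $\delta$-neighbourhood $U_\delta$ and uses the Cauchy integral formula to get $\norm{\partial^k f}^2_{L^2(\tilde\mu)}\le C(q,\delta)\int_{U_\delta}\abs{f}^2\,dm$, giving $T_q(\mu)\le C\,T_0(\chi_{U_\delta}\,dx)$. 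One then applies part~(i) to the domain $U_\delta$ and lets $\delta\to0$, using the continuity of capacity. This geometric enlargement trick is the missing ingredient in your outline.

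For the lower bound your instinct is closer to correct: the paper does obtain $T_q(\mu)\ge C\,T_0(\mu)+F$ with $F$ of finite rank, by showing that on a subspace of $\Fc^2$ of finite codimension the highest-order term $\norm{\partial^q f}_{L^2(\tilde\mu)}$ dominates all lower-order ones. But the reason is not any boundedness on the Fock side; it is the compactness of the embedding $H^1(0,s)\hookrightarrow L^2(0,s)$ applied to $t\mapsto f(\sigma(t))$ along the arc-length parametrisation, which gives $\norm{\partial^k f}_{L^2(\tilde\mu)}\le\beta\norm{\partial^q f}_{L^2(\tilde\mu)}$ for $k<q$ on a finite-codimension subspace. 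So the ``bounded inverse on a finite-codimension subspace'' you want is for $\partial$ viewed between $L^2$ spaces on the curve, not on $\Fc^2$.
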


Before proving this proposition, we need to recall the description of 
the subspaces $\Lcc_q$. 

\subsection{The structure of subspaces $\Lcc_q$}\label{subspaces}
Denote $\Psi(z)=\frac{1}{4}B|z|^2$.
Let us  define the creation and annihilation operators (first introduced
in this context by Fock \cite{Fock})
\begin{align*}
\Q=-2i e^{-\Psi}\overline{\partial}e^{\Psi}
&=-2i \overline{\partial}-\frac{B}2 iz
\\
\Q^*=-2i e^{\Psi}\partial e^{-\Psi}
&=-2i \partial+\frac{B}2 i\overline{z}.
\end{align*}
The Landau Hamiltonian  can be expressed as
\begin{equation}
{\x_0}=\Q^*\Q+B=\Q\Q^*-B. 
\label{b13}
\end{equation}
The spectrum and spectral subspaces of ${\x_0}$  can be described
in the following way.  The
equation $({\x_0}-B) u=0$ is equivalent to
\begin{equation}
\label{2:sol.0.1} \nonumber
 {\Q}u=-2i e^{-\Psi}\pa( e^{\Psi}u)=0.
\end{equation}
This means that
$f=e^{\Psi}u$ is an entire analytic function 
such that $e^{-\Psi}f\in L^2(\C)$. 
The space of such functions $f$
is called Fock or Segal-Bargmann space $\Fc^2$ 
(see \cite{Folland} for an  extensive discussion).
 So 
 $\Lcc_0=e^{-\Psi}\Fc^2$. Further 
eigenspaces  $\Lcc_q$, $q=1,2,\dots,$ are obtained as
$\Lcc_q=(\Q^*)^q\Lcc_0$. The operators $\Q^*, \Q$ act
between the subspaces $\Lcc_q$ as 
 \begin{equation}\label{1:CrAnn}
 \Q^*:\Lcc_q\mapsto \Lcc_{q+1},
\quad 
\Q :\Lcc_q\mapsto\Lcc_{q-1},
\quad
\Q:\Lcc_0\mapsto \{0\},
\end{equation}
and are, up to constant factors, isometries on $\Lcc_q$.
In particular, the substitution
\begin{equation} 
\Lcc_q\ni u=C_q^{-1}(\Q^*)^q
e^{-\Psi} f, \quad f\in \Fc^2, 
\quad C_q=\sqrt{q! (2B)^q}
\label{unitary}
\end{equation}
gives a unitary equivalence of spaces $\Lcc_q$ and $\Fc^2$.

\subsection{Proof of Proposition~\ref{prop1}}

(i) The proof is given in \cite[Lemma~3.1]{FilPush} for $q=0$ 
and \cite[Lemma~3.2]{FilPush} for $q\geq0$.

(ii)
For $q=0$ the result again follows from 
Lemma 3.1 in \cite{FilPush}. Although the reasoning
there concerns the operators 
$T_q(v)=(vP_q)^*(vP_q)$ where the function $v$ is
separated from zero on a compact, it goes through for  
$T_q(\mu)$. Only notational changes are required;  one simply has to replace the
measure $v(z)dm(z)$ by $d\m (z)$. 

For  $q\geq1$ below we apply  the
reduction to the lowest Landau level
similar to the proof of Lemma 3.2 in \cite{FilPush}.

Denote $d\tilde \mu(z)=e^{-\Psi(z)}d\mu(z)$. 
Applying  the unitary equivalence \eqref{unitary}, 
we get for $u\in \Lcc_q$
\begin{equation}\label{b9}
(T_q(\mu)u,u)_{L^2(\R^2)}
=
C_q^{-2}
\norm{(2\partial-B\overline{z})^q f}^2_{L^2(\tilde\mu)}.
\end{equation}
In particular, for $q=0$
\begin{equation}\label{b90}
(T_0(\mu)u,u)_{L^2(\R^2)}
=
C_0^{-2}
\norm{f}^2_{L^2(\tilde\mu)}.
\end{equation}
Below we separately prove the upper and lower bound for the 
quadratic form \eqref{b9}. 

1. \emph{Upper bound.}
Consider the open $\d$-neigh\-bor\-hood $U_\d\subset\C^1$ of the
curve $\G$. As it follows from the Cauchy integral
formula, for some constant $C_1(q,\delta)$, the inequality
$$
\norm{\partial^k f}_{L^2(\tilde \mu)}^2
\leq
C_1(q,\delta)
\int_{U_\delta}\abs{f(z)}^2dm(z).
$$
holds for all functions $f\in\Fc^2$. 
Thus, we have the estimate 
$$
\norm{(2\partial-B\overline{z})^q f}^2_{L^2(\tilde\mu)}
\leq 
C_2(q,\delta)
\int_{U_\delta}\abs{f(z)}^2dm(z).
$$
Using \eqref{b9}, \eqref{b90}, we arrive at the estimate
\begin{equation}
T_q(\mu)\leq C T_0(\chi_{{}_{U_\delta}}(x)dx),
\label{b4}
\end{equation}
where $\chi_{{}_{U_\delta}}$ is the characteristic function of the set $U_\delta$. 
Now we can again apply the estimate of \cite[Lemma~3.1]{FilPush} to the 
eigenvalues $s_1(\delta)\geq s_2(\delta)\geq \dots$ of $T_0(\chi_{{}_{U_\delta}}(x)dx)$.
This estimate together with \eqref{b4} yields 
$$
\lim_{n\to\infty}(n! s_n)^{1/n}
\leq
\lim_{n\to\infty}(n! s_n(\d))^{1/n}
\leq
\frac{B}{2}(\CapB (U_\d))^2.
$$
Finally, $\CapB (U_\d)\to \CapB(\G)$ as $\d\to 0$, and
this proves the upper bound.

2. \emph{Lower bound.} The lower bound for the spectrum of $T_q(\mu)$ requires a little more
work. 
Let $\sigma:[0,s]\to \C$ be the parameterization of $\Gamma$ by the arc length. 
Since $f$ is analytic, we have
\begin{equation}
(\partial f)(\sigma(t))=\rho(t)\frac{d}{dt} f(\sigma(t))
\label{b6}
\end{equation}
with some smooth factor $\rho(t)$, $\abs{\rho(t)}=1$. 

Next, due to the compactness of the embedding $H^1(0,s)\subset L^2(0,s)$, 
for any $\beta>0$ there exists a subspace of $H^1(0,s)$ of a finite 
codimension such that for any $u$ in this subspace, 
\begin{equation}
\int _0^s \abs{u(t)}^2 dt \leq \beta^2 \int_0^s \abs{u'(t)}^2 dt. 
\label{b7}
\end{equation}
It follows from \eqref{b6} and \eqref{b7} that for any $\beta>0$ 
there exists a subspace of $\Fc^2$ of a finite codimension such that for 
any $f$ in this subspace 
$$
\int_\Gamma \abs{f(z)}^2 d\tilde\mu(z)
\leq 
\beta^2 \int_\Gamma \abs{\partial f(z)}^2d\tilde\mu(z). 
$$
Arguing by induction, we obtain that for any $\beta>0$ there exists a
subspace $N=N(\beta,q)\subset\Fc^2$ of finite codimension
such that for all $f\in N(\beta,q)$
\begin{equation}
\int_\Gamma \abs{\partial^k f(z)}^2 d\tilde\mu(z)
\leq 
\beta^2 \int_\Gamma \abs{\partial^q f(z)}^2d\tilde\mu(z), 
\quad \forall k=0,1,\dots, q-1.
\label{b8}
\end{equation}
Using \eqref{b8} and choosing $\beta$ sufficiently small, we can estimate 
the form \eqref{b9} from below as follows: 
\begin{multline*}
\norm{(2\partial - B\overline z)^q f}^2_{L^2(\tilde\mu)}
\geq
(\norm{(2\partial)^q f}_{L^2(\tilde\mu)}-\sum_{k=0}^{q-1} C_{q,k}\norm{\partial^k f}_{L^2(\tilde \mu)})^2
\\
\geq
\norm{(2\partial)^q f}_{L^2(\tilde\mu)}^2(1-\sum_{k=0}^{q-1} C_{q,k}\beta)^2
=C_1 \norm{\partial^q f}_{L^2(\tilde\mu)}^2
\geq
C_2 \norm{ f}_{L^2(\tilde\mu)}^2
\end{multline*}
for all $f\in N(\beta,q)$.
Using \eqref{b9} and \eqref{b90},
 we arrive at the lower bound 
$$
T_q(\mu)\geq C T_0(\mu)+F
$$
where $F$ is a finite rank operator. 
For the
eigenvalues of $T_0(\mu)$  the required lower
estimates are already obtained by reference to \cite[Lemma~3.1]{FilPush};
 this completes the proof of the lower bound. 
\qed

\section{Proof of Proposition~\ref{th6}}\label{lastsec}

We will prove separately upper and lower bounds. 

\subsection{Proof of the upper bound}

1.
Let $U\subset\R^2$ be an open bounded set with a Lipschitz boundary,
$K\subset U$, and let
$\omega\in C_0^\infty(\R^2)$ be such that $\omega|_K=1$ and
$\omega|_{U^c}=0$.
Denote $\tilde \omega=1-\omega$.
Note that for any $\psi\in\mathcal H$, the function $\tilde\omega P_q\psi$
belongs both to $\Dom(\x_0)$ and to the form domain of $\x(K^c)$. 
Thus, by Proposition~\ref{th4}(ii) (with $A=\x_0$ and $B=\x(K^c)$), we have 
 $W\x_0 \tilde \omega P_q\psi=0$.
Thus, we have
\begin{multline*}
(WP_q\psi,P_q\psi)
=\frac1{\Lambda_q^2}(W\x_0 P_q\psi,\x_0 P_q\psi)
\\
=\frac1{\Lambda_q^2}(W\x_0 (\omega+\tilde \omega)P_q\psi,\x_0 (\omega+\tilde \omega)P_q\psi)
=\frac1{\Lambda_q^2}(W\x_0 \omega P_q\psi,\x_0 \omega P_q\psi).
\end{multline*}
Since $W=\x_0^{-1}-R(K^c)\leq \x_0^{-1}$, we have
$$
(W\x_0\omega P_q\psi,\x_0\omega P_q\psi)
\leq
(\x_0^{-1} \x_0\omega P_q\psi,\x_0\omega P_q\psi)
=
\norm{\omega P_q \psi}_{H_A^1}^2. 
$$
Using \eqref{b13}, we get
\begin{multline*}
\norm{\omega P_q \psi}_{H_A^1}^2
=
\norm{\Q^* \omega P_q \psi}^2 
-
B\norm{\omega P_q \psi}^2 
\leq 
\norm{\Q^* \omega P_q \psi}^2 
\\
=
\norm{\omega \Q^* P_q \psi-2i(\partial \omega)P_q\psi}^2
\leq
2\norm{\Q^* P_q \psi}^2_{L^2(U)}+C_1\norm{P_q\psi}^2_{L^2(U)}.
\end{multline*}

2. Due to the compactness of the embedding $H_0^1(U)\subset L^2(U)$,
for any $\beta>0$ there exists a subspace of $H_0^1(U)$ of a finite codimension
such that for all elements $u$ of this subspace,
$$
\int_{U} \abs{u(x)}^2 dx
\leq
\beta^2 \int_{U} \abs{\nabla u(x)}^2 dx
=
\beta^2 \int_{U} \abs{2 \partial u(x)}^2 dx.
$$
Taking $\beta$ sufficiently small, we obtain
\begin{multline*}
\norm{\Q^* u}_{L^2(U)}
\geq
\norm{2 \partial u}_{L^2(U)}
-
\frac{B}2\norm{\overline{z}u}_{L^2(U)}
\geq
\norm{2\partial u}_{L^2(U)}
-
\frac{B}2\sup_{U}\abs{z}
\norm{u}_{L^2(U)}
\\
\geq
(1-\frac{B}2\beta \sup_{U}\abs{z})\norm{2\partial u}_{L^2(U)}
\geq 
\frac12 \norm{2\partial u}_{L^2(U)}
\geq 
\frac1{2\beta}\norm{u}_{L^2(U)}
\end{multline*}
for all $u$ in our subspace. It follows that on a subspace of $\psi\in L^2(\R^2)$
of a finite codimension,
$$
(WP_q \psi, P_q \psi)_{L^2(\R^2)}
\leq
2 \norm{\Q^* P_q \psi}^2_{L^2(U)}
+
4\beta^2\norm{\Q^*P_q\psi}^2_{L^2(U)}
\leq
C\norm{P_{q+1}\psi}^2_{L^2(U)};
$$
the last inequality holds true by \eqref{1:CrAnn}.

Thus, we have
$$
P_q W P_q
\leq
C_3 P_{q+1} \chi_{U} P_{q+1}+F,
$$
where $\chi_U$ is the characteristic function of $U$,
and $F$ is a finite rank operator.

3. From Proposition~\ref{prop1} we get
$$
\limsup_{n\to\infty} (n!\mu_n)^{1/n}\leq \frac12 B(\CapB U)^2.
$$
Since $U$ can be chosen such that $\CapB U$ is arbitrarily close
to $\CapB K$, we get the required upper bound.

\subsection{Proof of the lower bound}
1.
Let $\gamma$, $J$, $\mu$ be as in section~\ref{restriction}. 
Consider the quadratic form in $L^2(\R^2)$ 
$$
\norm{u}^2_{H_A^1(\R^2)}
+
\int_{\supp\mu}\abs{u(x)}^2d\mu(x)
=
\norm{\x_0^{1/2}u}^2_{L^2(\R^2)}
+
\norm{\gamma J^* \x_0^{1/2}u}^2_{L^2(\R^2)}
$$
defined for $u\in H^1_A(\R^2)$. This form is closed and positively 
defined on $L^2(\R^2)$. 
Denote by $\tilde \x$ the corresponding self-adjoint operator
in $L^2(\R^2)$. We have
$$
\tilde \x
=
{\x_0}+\x_0^{1/2}J\gamma^*\gamma J^* \x_0^{1/2}
=\x_0^{1/2}(I+J\gamma^*\gamma J^*) \x_0^{1/2}
$$
and therefore
$$
\x_0^{-1}-\tilde \x^{-1}
=
\x_0^{-1/2}[J\gamma^*\gamma J^*(I+J\gamma^*\gamma J^*)^{-1}]\x_0^{-1/2}.
$$
Since $\gamma$ is compact by assumption, we have $J\gamma^*\gamma J^*\leq I$ 
on a subspace of a finite codimension. Thus, 
$$
\x_0^{-1}-\tilde \x^{-1}\geq \frac12 {\x_0}^{-1/2}J\gamma^*\gamma J^* \x_0^{-1/2}
$$
on a subspace of finite codimension, and so 
\begin{equation}
P_q(\x_0^{-1}-\tilde \x^{-1})P_q 
\geq
\frac1{2\Lambda_q}(\gamma J^* P_q)^*(\gamma J^* P_q)+F
\label{b1}
\end{equation}
where $F$ is a finite rank operator. 

2.
Now let $K\subset \R^2$ be a compact with a non-empty interior.
Let $K_1\subset K$ be a set with a Lipschitz boundary.
Let $\mu$ be the restriction of the Lebesgue measure on $K_1$.
By Proposition~\ref{th4}(i), we have
$\x_0^{-1}\geq \tilde \x^{-1}\geq R(K^c)$.
It follows that
\begin{equation}
P_q(\x_0^{-1}-R(K^c))P_q
\geq
P_q(\x_0^{-1}-\tilde \x^{-1})P_q.
\label{b2}
\end{equation}
From here, using \eqref{b1} and Proposition~\ref{prop1}(i), we get
the required lower bound in the first part of Proposition~\ref{th6}.
Finally, consider the case of $K$ being a smooth curve.
Let $\mu$ be the arc measure of the curve.
Then, again by \eqref{b1} and \eqref{b2}, and applying Proposition~\ref{prop1}(ii),
we get the second part of  Proposition~\ref{th6}.


\begin{thebibliography}{333}
 \bibitem{AdHed} Adams D.R., Hedberg L.I., 
 {\it Function Spaces and Potential Theory} Springer, 1995 
\bibitem{BS} Birman, M. Sh.; Solomjak, M. Z.
Spectral theory of selfadjoint operators in Hilbert space.
 D. Reidel Publishing Co.,  1987.
\bibitem{FilPush} Filonov, N., Pushnitski, A.:
 Spectral asymptotics of Pauli operators and
  orthogonal polynomials in complex domains. Comm.
  Math. Phys. 264, 759 -- 772 (2006)
\bibitem{Fock1}
  Fock, V.: Bemerkung zur Quantelung des harmonischen Oszillators
  im Magnetfeld, Z. Phys 47, 446--448 (1928)
\bibitem{Fock}Fock, V.: Konfigurationsraum und zweite
Quantelung, Z. Phys. 75, 622--647 (1932)
\bibitem{Folland}  Folland G.: Harmonic Analysis in
Phase Space, Ann. Math. Studies, 122, Princeton Univ. Press, Princeton, 1989.
\bibitem{Uzy}
Hornberger, K., Smilansky, U.:
Magnetic edge states, Physics Reports, 367 (2002), 249-385.
  \bibitem{IwaTam}Iwatsuka, A., Tamura,
H. Asymptotic distribution of negative eigenvalues
 for two-dimensional Pauli operators with nonconstant
  magnetic fields.  Ann. Inst. Fourier (Grenoble)  48  (1998),  no. 2,
  479--515.
  \bibitem{Landau}
  Landau, L: Diamagnetismus der Metalle. 
  Z. Phys. 64, 629--637 (1930).
  
\bibitem{LanLif}Landau, L.,  Lifshitz, E.: Quantum
Mechanics (Non-relativistic theory). 3rd edition, Pergamon Press, Oxford, 1977.
\bibitem{Lan}Landkof, N.S.: Foundations of modern potential theory.
    Springer, NY, 1972.
\bibitem{LiebLoss}
Lieb, Elliott H.; Loss, Michael.
 Analysis.  American Mathematical Society, 1997.
 \bibitem{MelRoz}Melgaard, M., Rozenblum, G.: Eigenvalue
 asymptotics for weakly perturbed Dirac and Schr\"odinger
  operators with constant magnetic fields of full rank.
   Comm. Partial Differential Equations  28,
      697--736 (2003).

\bibitem{RaiWar}Raikov, G. Warzel, S.:
 Quasi-classical versus non-classical spectral
  asymptotics for magnetic Schro\"odinger operators
   with decreasing electric potentials.  Rev. Math. Phys.
    14,  1051--1072 (2002).

    \bibitem{RozTa} Rozenblum G., Tashchiyan G.:
    On the spectrum of the perturbed Landau
    Hamiltonian. Preprint
    \verb"http://front.math.ucdavis.edu/math-ph/0605038".
\bibitem{Simon2}
Simon, B.,
A canonical decomposition for quadratic forms with applications to
monotone convergence theorems. J. Funct. Anal. 28 (1978), no. 3, 377-385.

\end{thebibliography}
 \end{document}